\newtheorem{theorem}{Theorem}
\theoremstyle{plain}
\newtheorem{corollary}{Corollary}
\newtheorem{definition}{Definition}
\newtheorem{lemma}{Lemma}
\newtheorem{proposition}{Proposition}
\newtheorem{remark}{Remark}
\numberwithin{equation}{section}
\begin{document}
\title[On $s$-GA-Convex Functions]{Hermite-Hadamard type inequalities for GA-%
$s$-convex functions }
\author{\.{I}mdat \.{I}\c{s}can}
\address{Department of Mathematics, Faculty of Arts and Sciences,\\
Giresun University, 28100, Giresun, Turkey.}
\email{imdati@yahoo.com, imdat.iscan@giresun.edu.tr}
\subjclass[2000]{Primary 26D15; Secondary 26A51}
\keywords{GA-$s$-convex, Hermite-Hadamard type inequality}

\begin{abstract}
In this paper, The author introduces the concepts of the GA-$s$-convex
functions in the first sense and second sense and establishes some integral
inequalities of Hermite-Hadamard type related to the GA-$s$-convex functions.
\end{abstract}

\maketitle

\section{Introduction}

In this section, we firstly list several definitions and some known results.

The following concept was introduced by Orlicz in \cite{O61}:

\begin{definition}
Let $0<s\leq 1$. A function $f:I\subseteq 
\mathbb{R}
_{+}\rightarrow 
\mathbb{R}
$ where $%
\mathbb{R}
_{+}=\left[ 0,\infty \right) $, is said to be $s$-convex in the first sense
if%
\begin{equation*}
f\left( \alpha x+\beta y\right) \leq \alpha ^{s}f(x)+\beta ^{s}f(y)
\end{equation*}%
for all $x,y\in I$ and $\alpha ,\beta \geq 0$ with $\alpha ^{s}+\beta ^{s}=1$%
. We denote this class of real functions by $K_{s}^{1}.$
\end{definition}

In \cite{HM94}, Hudzik and Maligranda considered the following class of
functions:

\begin{definition}
A function $f:I\subseteq 
\mathbb{R}
_{+}\rightarrow 
\mathbb{R}
$ where $%
\mathbb{R}
_{+}=\left[ 0,\infty \right) $, is said to be $s$-convex in the second sense
if%
\begin{equation*}
f\left( \alpha x+\beta y\right) \leq \alpha ^{s}f(x)+\beta ^{s}f(y)
\end{equation*}%
for all $x,y\in I$ and $\alpha ,\beta \geq 0$ with $\alpha +\beta =1$ and $s$
fixed in $\left( 0,1\right] $. They denoted this by $K_{s}^{2}.$
\end{definition}

It can be easily seen that for $s=1$, $s$-convexity reduces to ordinary
convexity of functions defined on $[0,\infty )$.

In \cite{DF99}, Dragomir and Fitzpatrick proved a variant of
Hermite-Hadamard inequality which holds for the $s$-convex functions.

\begin{theorem}
Suppose that $f:%
\mathbb{R}
_{+}\mathbb{\rightarrow }%
\mathbb{R}
_{+}$ is an $s$-convex function in the second sense, where $s\in \lbrack
0,1) $ and let $a,b\in \lbrack 0,\infty )$, $a<b$. If $f\in L\left[ a,b%
\right] $, then the following inequalities hold 
\begin{equation}
2^{s-1}f\left( \frac{a+b}{2}\right) \leq \frac{1}{b-a}\dint%
\limits_{a}^{b}f(x)dx\leq \frac{f(a)+f(b)}{s+1}\text{.}  \label{1-1}
\end{equation}%
the constant $k=\frac{1}{s+1}$ is the best possible in the second inequality
in (\ref{1-1}).
\end{theorem}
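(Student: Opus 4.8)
The plan is to prove the two inequalities in (\ref{1-1}) separately by integrating the defining inequality of $s$-convexity after a suitable change of variables, and then to establish sharpness of the right-hand constant by exhibiting an explicit extremal function.

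For the left inequality I would apply the definition of $s$-convexity in the second sense with $\alpha = \beta = \tfrac12$ to the pair of points $ta + (1-t)b$ and $(1-t)a + tb$, whose midpoint is exactly $\tfrac{a+b}{2}$ for every $t \in [0,1]$. This yields $f\!\left(\tfrac{a+b}{2}\right) \le 2^{-s}\left[ f(ta+(1-t)b) + f((1-t)a+tb) \right]$. Integrating both sides over $t \in [0,1]$ and performing the linear substitution $x = ta + (1-t)b$ (and symmetrically for the other term) identifies each of the two integrals on the right with $\tfrac{1}{b-a}\int_a^b f(x)\,dx$. After multiplying through by $2^{s-1}$ the first inequality in (\ref{1-1}) drops out.

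For the right inequality I would instead apply the definition with $\alpha = t$, $\beta = 1-t$ directly to $x = ta + (1-t)b$, giving $f(ta+(1-t)b) \le t^s f(a) + (1-t)^s f(b)$. Integrating over $t \in [0,1]$, the left side again becomes $\tfrac{1}{b-a}\int_a^b f(x)\,dx$, while the right side reduces to $\tfrac{f(a)+f(b)}{s+1}$ using the elementary computation $\int_0^1 t^s\,dt = \int_0^1 (1-t)^s\,dt = \tfrac{1}{s+1}$.

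The part I expect to require the most care is the sharpness claim. Here the plan is to test the inequality on $f(x) = x^s$ with $a = 0$ and $b = 1$. One first checks that this $f$ is genuinely $s$-convex in the second sense: by the subadditivity of $t \mapsto t^s$ on $[0,\infty)$ for $s \in (0,1]$ one has $(\alpha x + \beta y)^s \le (\alpha x)^s + (\beta y)^s = \alpha^s x^s + \beta^s y^s$ whenever $\alpha + \beta = 1$. For this choice $\tfrac{1}{b-a}\int_a^b f(x)\,dx = \int_0^1 x^s\,dx = \tfrac{1}{s+1}$, while $\tfrac{f(0)+f(1)}{s+1} = \tfrac{1}{s+1}$, so equality holds. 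Consequently no constant smaller than $\tfrac{1}{s+1}$ can replace $k$ in the second inequality, which is precisely the assertion of best possibility.
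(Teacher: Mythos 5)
Your proof is correct, and it is essentially the standard argument: the paper itself states this theorem without proof (it is quoted from Dragomir--Fitzpatrick \cite{DF99}), but your route --- the midpoint substitution $u=ta+(1-t)b$, $v=(1-t)a+tb$ for the left inequality, direct integration of $f(ta+(1-t)b)\leq t^{s}f(a)+(1-t)^{s}f(b)$ for the right, and the extremal function $f(x)=x^{s}$ on $[0,1]$ for sharpness --- is exactly the additive counterpart of the proof the paper gives for its GA-$s$-convex analogue (\ref{3-2}). The only blemish is cosmetic: the theorem as printed says $s\in[0,1)$ while your sharpness verification (correctly) uses $s\in(0,1]$, which matches the paper's own definition of the class $K_{s}^{2}$; this is a typo in the statement rather than a gap in your argument.
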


The above inequalities are sharp. For recent results and generalizations
concerning $s$-convex functions see \cite{ADK11,DF99,HBI09,I13b,KBO07}

\begin{definition}[\protect\cite{N00,N03}]
A function $f:I\subseteq \mathbb{R}_{+}\mathbb{\rightarrow R}$ is said to be
a GA-convex function on $I$ if%
\begin{equation*}
f(x^{t}y^{1-t})\leq tf(x)+(1-t)f(y)
\end{equation*}%
holds for all $x,y\in I$ and $t\in \left[ 0,1\right] $, where $x^{t}y^{1-t}$
and $tf(x)+(1-t)f(y)$ are respectively called the weighted geometric mean of
two positive numbers $x$ and $y$ and the weighted arithmetic mean of $f(x)$
and $f(y)$.
\end{definition}

For $b>a>0$, let $G\left( a,b\right) =\sqrt{ab}$, $L\left( a,b\right)
=\left( b-a\right) /\left( \ln b-\ln a\right) $, $I\left( a,b\right) =\left(
1/e\right) \left( b^{b}/a^{a}\right) ^{1/(b-a)}$, $A\left( a,b\right) =\frac{%
a+b}{2}$, and $L_{p}\left( a,b\right) =\left( \frac{b^{p+1}-a^{p+1}}{%
(p+1)(b-a)}\right) ^{\frac{1}{p}}$,$\ p\in 
\mathbb{R}
\backslash \left\{ -1,0\right\} $, be the geometric, logarithmic, identric,
arithmetic and p-logarithmic means of $a$ and $b$, respectively. Then 
\begin{equation*}
\min \left\{ a,b\right\} <G\left( a,b\right) <L\left( a,b\right) <I\left(
a,b\right) <A\left( a,b\right) <\max \left\{ a,b\right\} .
\end{equation*}

In \cite{ZJQ13}, Zhang et al. established some Hermite-Hadamard type
integral inequalities for GA-convex functions and applied these inequalities
to construct several inequalities for special means and they used the
following lemma to prove their results:

\begin{lemma}
Let $f:I\subseteq \mathbb{R}_{+}\mathbb{\rightarrow R}$ be a differentiable
mapping on $I^{\circ }$, and $a,b\in I^{\circ }$,with $a<b$. If $f^{\prime
}\in L\left[ a,b\right] $, then%
\begin{equation*}
bf(b)-af(a)-\dint\limits_{a}^{b}f(x)dx=\left( \ln b-\ln a\right)
\dint\limits_{0}^{1}b^{2t}a^{2\left( 1-t\right) }f^{\prime }\left(
b^{t}a^{1-t}\right) dt.
\end{equation*}
\end{lemma}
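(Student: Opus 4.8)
The plan is to verify the identity by working from the right-hand side: a logarithmic change of variables collapses the weighted integral involving $f'$ into the simple integral $\int_a^b x f'(x)\,dx$, and a single integration by parts then produces the left-hand side.

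First I would make the substitution $x = b^{t}a^{1-t}$. Taking logarithms gives $\ln x = \ln a + t(\ln b - \ln a)$, so that $dx = x(\ln b - \ln a)\,dt$, and the limits $t=0$ and $t=1$ correspond to $x=a$ and $x=b$ respectively. The crucial algebraic observation is that the weight simplifies as $b^{2t}a^{2(1-t)} = \left( b^{t}a^{1-t}\right)^{2} = x^{2}$. Substituting into the right-hand side, the factor $\ln b - \ln a$ cancels against the $1/(\ln b - \ln a)$ arising from $dt = dx/[x(\ln b - \ln a)]$, and one power of $x$ cancels as well (since $x^{2}/x = x$), leaving exactly $\int_a^b x f'(x)\,dx$.

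Second, I would integrate by parts with $u = x$ and $dv = f'(x)\,dx$, which is legitimate because $f$ is differentiable on $I^{\circ}$ and $f' \in L[a,b]$. This gives $\int_a^b x f'(x)\,dx = \bigl[x f(x)\bigr]_a^b - \int_a^b f(x)\,dx = bf(b) - af(a) - \int_a^b f(x)\,dx$, which is precisely the left-hand side, completing the argument.

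I do not expect any serious obstacle here; the identity is essentially a bookkeeping exercise. The one step that requires care is the change of variables: one must recognize that the exponent $2$ in $b^{2t}a^{2(1-t)}$ is exactly what is needed to produce $x^{2}$, so that after accounting for the Jacobian $dx = x(\ln b - \ln a)\,dt$ the integrand reduces cleanly to $x f'(x)$. The integrability hypothesis $f' \in L[a,b]$ is what licenses both the substitution and the integration by parts.
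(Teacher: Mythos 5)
Your argument is correct: the substitution $x=b^{t}a^{1-t}$ (with $dx=x(\ln b-\ln a)\,dt$ and $b^{2t}a^{2(1-t)}=x^{2}$) reduces the right-hand side to $\int_{a}^{b}xf'(x)\,dx$, and integration by parts then yields $bf(b)-af(a)-\int_{a}^{b}f(x)\,dx$. The paper itself states this lemma without proof, quoting it from the cited work of Zhang, Ji and Qi, and your derivation is the standard one for identities of this type, so there is nothing to correct.
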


Also, the main inequalities in \cite{ZJQ13} are pointed out as follows:

\begin{theorem}
Let $f:I\subseteq \mathbb{R}_{+}\mathbb{\rightarrow R}$ be differentiable on 
$I^{\circ }$, and $a,b\in I$ with $a<b$ and $f^{\prime }\in L\left[ a,b%
\right] .$ If $\left\vert f^{\prime }\right\vert ^{q}$ is GA-convex on $%
\left[ a,b\right] $ for $q\geq 1$, then%
\begin{equation*}
\left\vert bf(b)-af(a)-\dint\limits_{a}^{b}f(x)dx\right\vert \leq \frac{%
\left[ \left( b-a\right) A\left( a,b\right) \right] ^{1-1/q}}{2^{1/q}}
\end{equation*}%
\begin{equation*}
\times \left\{ \left[ L(a^{2},b^{2})-a^{2}\right] \left\vert f^{\prime
}(a)\right\vert ^{q}+\left[ b^{2}-L(a^{2},b^{2})\right] \left\vert f^{\prime
}(b)\right\vert ^{q}\right\} ^{1/q}.
\end{equation*}
\end{theorem}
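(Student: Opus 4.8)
The plan is to start from the integral identity furnished by the Lemma,
\[
bf(b)-af(a)-\int_{a}^{b}f(x)\,dx=\left( \ln b-\ln a\right) \int_{0}^{1}b^{2t}a^{2\left( 1-t\right) }f^{\prime }\left( b^{t}a^{1-t}\right) dt,
\]
and to estimate the right-hand side. First I would take absolute values and push them inside the integral by the triangle inequality, obtaining the bound $\left( \ln b-\ln a\right) \int_{0}^{1}b^{2t}a^{2(1-t)}\left\vert f^{\prime }(b^{t}a^{1-t})\right\vert dt$.

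To introduce the exponent $q$, I would apply the power-mean (H\"{o}lder) inequality with the nonnegative weight $g(t)=b^{2t}a^{2(1-t)}$: factoring $g\left\vert f^{\prime }\right\vert =g^{1-1/q}\cdot \left( g^{1/q}\left\vert f^{\prime }\right\vert \right) $ and using the conjugate exponents $q/(q-1)$ and $q$ yields
\[
\int_{0}^{1}g(t)\left\vert f^{\prime }\right\vert dt\leq \left( \int_{0}^{1}g(t)\,dt\right) ^{1-1/q}\left( \int_{0}^{1}g(t)\left\vert f^{\prime }\right\vert ^{q}dt\right) ^{1/q}.
\]
Next I would invoke the GA-convexity of $\left\vert f^{\prime }\right\vert ^{q}$, which gives $\left\vert f^{\prime }(b^{t}a^{1-t})\right\vert ^{q}\leq t\left\vert f^{\prime }(b)\right\vert ^{q}+(1-t)\left\vert f^{\prime }(a)\right\vert ^{q}$, to linearize the integrand inside the second factor.

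The estimate then reduces to three elementary integrals of $b^{2t}a^{2(1-t)}=a^{2}(b^{2}/a^{2})^{t}$. I expect $\int_{0}^{1}b^{2t}a^{2(1-t)}dt=L(a^{2},b^{2})$ and, after one integration by parts, $\int_{0}^{1}t\,b^{2t}a^{2(1-t)}dt=[b^{2}-L(a^{2},b^{2})]/[2(\ln b-\ln a)]$, whence $\int_{0}^{1}(1-t)b^{2t}a^{2(1-t)}dt=[L(a^{2},b^{2})-a^{2}]/[2(\ln b-\ln a)]$ by subtraction; the check that these last two sum to $L(a^{2},b^{2})$ guards the arithmetic. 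Substituting back leaves the prefactor $\left( \ln b-\ln a\right) L(a^{2},b^{2})^{1-1/q}[2(\ln b-\ln a)]^{-1/q}$ multiplying $\{[L(a^{2},b^{2})-a^{2}]\left\vert f^{\prime }(a)\right\vert ^{q}+[b^{2}-L(a^{2},b^{2})]\left\vert f^{\prime }(b)\right\vert ^{q}\}^{1/q}$.

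The final step is the simplification that produces the stated form: the prefactor regroups as $[(\ln b-\ln a)L(a^{2},b^{2})]^{1-1/q}/2^{1/q}$, and the identity $(\ln b-\ln a)L(a^{2},b^{2})=(b^{2}-a^{2})/2=(b-a)A(a,b)$ collapses it to $[(b-a)A(a,b)]^{1-1/q}/2^{1/q}$, exactly as claimed. The main obstacle is purely computational---evaluating the weighted exponential integrals and correctly matching the $L(a^{2},b^{2})$ terms---whereas the structural steps (triangle inequality, H\"{o}lder, GA-convexity) are routine; note that for $q=1$ the H\"{o}lder step degenerates harmlessly, its first factor carrying exponent zero.
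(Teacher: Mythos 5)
Your proof is correct and follows exactly the route the paper indicates for this result: the integral identity $bf(b)-af(a)-\int_{a}^{b}f(x)\,dx=(\ln b-\ln a)\int_{0}^{1}b^{2t}a^{2(1-t)}f^{\prime}(b^{t}a^{1-t})\,dt$ combined with the power-mean (weighted H\"{o}lder) inequality and the GA-convexity of $\left\vert f^{\prime}\right\vert^{q}$; your evaluations of the weighted integrals and the reduction $(\ln b-\ln a)L(a^{2},b^{2})=(b-a)A(a,b)$ all check out. Note that the paper itself does not reprove this theorem---it quotes it from Zhang, Ji and Qi \cite{ZJQ13} and only reproduces the lemma used there---but your argument is the standard one and recovers the stated constants exactly.
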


\begin{theorem}
Let $f:I\subseteq \mathbb{R}_{+}\mathbb{\rightarrow R}$ be differentiable on 
$I^{\circ }$, and $a,b\in I$ with $a<b$ and $f^{\prime }\in L\left[ a,b%
\right] .$ If $\left\vert f^{\prime }\right\vert ^{q}$ is GA-convex on $%
\left[ a,b\right] $ for $q>1$, then%
\begin{equation*}
\left\vert bf(b)-af(a)-\dint\limits_{a}^{b}f(x)dx\right\vert \leq \left( \ln
b-\ln a\right)
\end{equation*}%
\begin{equation*}
\times \left[ L(a^{2q/(q-1)},b^{2q/(q-1)})-a^{2q/(q-1)}\right] ^{1-1/q}\left[
A\left( \left\vert f^{\prime }(a)\right\vert ^{q},\left\vert f^{\prime
}(b)\right\vert ^{q}\right) \right] ^{1/q}.
\end{equation*}
\end{theorem}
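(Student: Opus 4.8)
The plan is to begin from the integral identity in the Lemma, pass to absolute values, and then combine Hölder's inequality with the GA-convexity hypothesis on $\left\vert f^{\prime }\right\vert ^{q}$. The Lemma gives
\[
bf(b)-af(a)-\int_{a}^{b}f(x)\,dx=\left( \ln b-\ln a\right) \int_{0}^{1}b^{2t}a^{2(1-t)}f^{\prime }\left( b^{t}a^{1-t}\right) \,dt,
\]
so the triangle inequality for integrals immediately yields
\[
\left\vert bf(b)-af(a)-\int_{a}^{b}f(x)\,dx\right\vert \leq \left( \ln b-\ln a\right) \int_{0}^{1}b^{2t}a^{2(1-t)}\left\vert f^{\prime }\left( b^{t}a^{1-t}\right) \right\vert \,dt.
\]

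First I would split the integrand as $b^{2t}a^{2(1-t)}\cdot \left\vert f^{\prime }\left( b^{t}a^{1-t}\right) \right\vert $ and apply Hölder's inequality with conjugate exponents $p=q/(q-1)$ and $q$, assigning the geometric weight to the first factor:
\[
\int_{0}^{1}b^{2t}a^{2(1-t)}\left\vert f^{\prime }\left( b^{t}a^{1-t}\right) \right\vert dt\leq \left( \int_{0}^{1}\left( b^{2t}a^{2(1-t)}\right) ^{q/(q-1)}dt\right) ^{1-1/q}\left( \int_{0}^{1}\left\vert f^{\prime }\left( b^{t}a^{1-t}\right) \right\vert ^{q}dt\right) ^{1/q}.
\]
For the second factor I would invoke GA-convexity of $\left\vert f^{\prime }\right\vert ^{q}$ with $x=b$, $y=a$ in the defining inequality, giving $\left\vert f^{\prime }\left( b^{t}a^{1-t}\right) \right\vert ^{q}\leq t\left\vert f^{\prime }(b)\right\vert ^{q}+(1-t)\left\vert f^{\prime }(a)\right\vert ^{q}$; integrating over $[0,1]$ produces $\tfrac{1}{2}\left( \left\vert f^{\prime }(a)\right\vert ^{q}+\left\vert f^{\prime }(b)\right\vert ^{q}\right) =A\left( \left\vert f^{\prime }(a)\right\vert ^{q},\left\vert f^{\prime }(b)\right\vert ^{q}\right) $, which raised to the power $1/q$ is exactly the last factor of the claimed bound.

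For the first factor I would rewrite $\left( b^{2t}a^{2(1-t)}\right) ^{q/(q-1)}=\left( b^{2q/(q-1)}\right) ^{t}\left( a^{2q/(q-1)}\right) ^{1-t}$, a pure exponential in $t$, and evaluate it against the elementary identity $\int_{0}^{1}c^{t}d^{1-t}\,dt=(c-d)/(\ln c-\ln d)=L(d,c)$, which identifies the integral with a logarithmic mean of $a^{2q/(q-1)}$ and $b^{2q/(q-1)}$. Multiplying the prefactor $\left( \ln b-\ln a\right) $ by the two estimates then assembles the full inequality. The structural choice here — Hölder with fully separated weight rather than the power-mean inequality used for $q\geq 1$ in the previous theorem — is what makes the hypothesis $q>1$ natural.

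The hard part will be the exponent bookkeeping in the first factor: the clean evaluation above gives precisely $L\left( a^{2q/(q-1)},b^{2q/(q-1)}\right) $, whereas the statement carries the term $\left[ L\left( a^{2q/(q-1)},b^{2q/(q-1)}\right) -a^{2q/(q-1)}\right] ^{1-1/q}$. Reconciling these is exactly the delicate step, and it is where I would scrutinize the calculation most carefully — in particular whether the subtracted $a^{2q/(q-1)}$ arises from a slightly different distribution of the geometric weight between the two Hölder factors (which would introduce a $(1-t)$ weight and hence the integral $\int_{0}^{1}(1-t)\left( b^{2q/(q-1)}\right) ^{t}\left( a^{2q/(q-1)}\right) ^{1-t}dt$, whose value is proportional to $L\left( a^{2q/(q-1)},b^{2q/(q-1)}\right) -a^{2q/(q-1)}$). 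The GA-convexity step and the triangle-inequality reduction are otherwise entirely routine.
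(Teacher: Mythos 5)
First, an important contextual point: the paper contains no proof of this statement. It is quoted in the introduction as background, being one of ``the main inequalities in \cite{ZJQ13}'', so there is no in-house argument to compare yours against; your attempt has to stand on its own. On its own terms, every step you actually carry out is correct and is the natural route: the Lemma plus the triangle inequality, H\"{o}lder with exponents $q/(q-1)$ and $q$ with the whole geometric weight in the first factor, and GA-convexity of $\left\vert f^{\prime }\right\vert ^{q}$ integrated over $[0,1]$ to produce $A\left( \left\vert f^{\prime }(a)\right\vert ^{q},\left\vert f^{\prime }(b)\right\vert ^{q}\right) ^{1/q}$. Since $\int_{0}^{1}\left( b^{2t}a^{2(1-t)}\right) ^{q/(q-1)}dt=L\left( a^{2q/(q-1)},b^{2q/(q-1)}\right) $ exactly, what you prove is the bound with $\left[ L\left( a^{2q/(q-1)},b^{2q/(q-1)}\right) \right] ^{1-1/q}$ in that position.

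The discrepancy you flag at the end is therefore not exponent bookkeeping that can be reconciled: the printed statement, with $L\left( a^{2q/(q-1)},b^{2q/(q-1)}\right) -a^{2q/(q-1)}$ in place of $L\left( a^{2q/(q-1)},b^{2q/(q-1)}\right) $, is strictly stronger than the H\"{o}lder bound (the bracket is smaller), and it is in fact false as stated. Take $f(x)=x$, so that $\left\vert f^{\prime }\right\vert ^{q}\equiv 1$ is GA-convex; the left-hand side is $\left( b^{2}-a^{2}\right) /2\sim a(b-a)$ as $b\rightarrow a^{+}$, while the right-hand side behaves like
\begin{equation*}
\ln \left( \tfrac{b}{a}\right) \left[ L\left( a^{2q/(q-1)},b^{2q/(q-1)}\right) -a^{2q/(q-1)}\right] ^{1-1/q}\sim C\,(b-a)^{2-1/q}=o(b-a),
\end{equation*}
so the claimed inequality is violated for $b$ close enough to $a$. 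Your instinct that the subtracted term comes from a $(1-t)$ weight is algebraically right, since $\int_{0}^{1}(1-t)c^{t}d^{1-t}dt=\left( L(d,c)-d\right) /\left( \ln c-\ln d\right) $, but no admissible redistribution of weight in H\"{o}lder produces it here: pushing $(1-t)^{1/p}$ into the first factor forces $(1-t)^{-(q-1)}$ into the second, which is not integrable for $q\geq 2$. Conclusion: treat the quoted theorem as a mis-transcription of the result in \cite{ZJQ13}; your argument is a complete and correct proof of the corrected version with $L$ in place of $L-a^{2q/(q-1)}$, and you should not spend further effort trying to derive the version as printed.
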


\begin{theorem}
Let $f:I\subseteq \mathbb{R}_{+}\mathbb{\rightarrow R}$ be differentiable on 
$I^{\circ }$, and $a,b\in I$ with $a<b$ and $f^{\prime }\in L\left[ a,b%
\right] .$ If $\left\vert f^{\prime }\right\vert ^{q}$ is GA-convex on $%
\left[ a,b\right] $ for $q>1$ and $2q>p>0$, then%
\begin{equation*}
\left\vert bf(b)-af(a)-\dint\limits_{a}^{b}f(x)dx\right\vert \leq \frac{%
\left( \ln b-\ln a\right) ^{1-1/q}}{p^{1/q}}
\end{equation*}%
\begin{eqnarray*}
&&\times \left[ L(a^{(2q-p)/(q-1)},b^{(2q-p)/(q-1)})\right] ^{1-1/q} \\
&&\times \left\{ \left[ L(a^{p},b^{p})-a^{p}\right] \left\vert f^{\prime
}(a)\right\vert ^{q}+\left[ b^{p}-L(a^{p},b^{p})\right] \left\vert f^{\prime
}(b)\right\vert ^{q}\right\} ^{1/q}.
\end{eqnarray*}
\end{theorem}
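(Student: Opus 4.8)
The plan is to begin from the integral representation in the Lemma, take absolute values, and move the modulus inside the integral to obtain
\begin{equation*}
\left\vert bf(b)-af(a)-\dint\limits_{a}^{b}f(x)dx\right\vert \leq \left( \ln b-\ln a\right) \dint\limits_{0}^{1}b^{2t}a^{2\left( 1-t\right) }\left\vert f^{\prime }\left( b^{t}a^{1-t}\right) \right\vert dt.
\end{equation*}
Writing $g(t)=b^{t}a^{1-t}$ so that $b^{2t}a^{2(1-t)}=g(t)^{2}$, the decisive step is to split this weight as $g(t)^{2}=g(t)^{2-p/q}\cdot g(t)^{p/q}$ and apply H\"{o}lder's inequality with conjugate exponents $q/(q-1)$ and $q$. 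The exponent $p/q$ is chosen precisely so that the second factor becomes $g(t)^{p}\left\vert f^{\prime }(g(t))\right\vert ^{q}$, which is what the GA-convexity hypothesis on $\left\vert f^{\prime }\right\vert ^{q}$ will control; correspondingly the first factor gets raised to the power $(2-p/q)\cdot q/(q-1)=(2q-p)/(q-1)$. The assumptions $q>1$ and $2q>p>0$ are exactly what is needed here, since they make the conjugate exponent finite and guarantee that $(2q-p)/(q-1)>0$, so the resulting expressions are genuine logarithmic means of positive quantities.

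After H\"{o}lder, I would evaluate the first integral using the elementary identity $\int_{0}^{1}g(t)^{r}dt=\int_{0}^{1}b^{rt}a^{r(1-t)}dt=(b^{r}-a^{r})/\bigl(r(\ln b-\ln a)\bigr)=L(a^{r},b^{r})$, applied with $r=(2q-p)/(q-1)$; raised to the power $1-1/q=1/q'$ this produces the factor $\bigl[L(a^{(2q-p)/(q-1)},b^{(2q-p)/(q-1)})\bigr]^{1-1/q}$. For the second integral I would invoke the Definition of GA-convexity with $x=b$, $y=a$, giving $\left\vert f^{\prime }(b^{t}a^{1-t})\right\vert ^{q}\leq t\left\vert f^{\prime }(b)\right\vert ^{q}+(1-t)\left\vert f^{\prime }(a)\right\vert ^{q}$, so that
\begin{equation*}
\dint\limits_{0}^{1}g(t)^{p}\left\vert f^{\prime }(g(t))\right\vert ^{q}dt\leq \left\vert f^{\prime }(b)\right\vert ^{q}\dint\limits_{0}^{1}t\,g(t)^{p}dt+\left\vert f^{\prime }(a)\right\vert ^{q}\dint\limits_{0}^{1}(1-t)g(t)^{p}dt.
\end{equation*}

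The remaining work is to identify the two weighted moments. Using $\int_{0}^{1}te^{\lambda t}dt=e^{\lambda}/\lambda-(e^{\lambda}-1)/\lambda^{2}$ with $\lambda=p(\ln b-\ln a)$, one finds $\int_{0}^{1}t\,g(t)^{p}dt=\bigl[b^{p}-L(a^{p},b^{p})\bigr]/\bigl(p(\ln b-\ln a)\bigr)$, and subtracting from $\int_{0}^{1}g(t)^{p}dt=L(a^{p},b^{p})$ gives $\int_{0}^{1}(1-t)g(t)^{p}dt=\bigl[L(a^{p},b^{p})-a^{p}\bigr]/\bigl(p(\ln b-\ln a)\bigr)$. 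Factoring out $1/\bigl(p(\ln b-\ln a)\bigr)$, taking the $1/q$ power, and multiplying by the leading $(\ln b-\ln a)$ from the Lemma together with the first H\"{o}lder factor collapses the $(\ln b-\ln a)$ powers to $(\ln b-\ln a)^{1-1/q}$ and yields the stated bound. I expect the main obstacle to be purely computational: carrying out these two moment integrals cleanly and checking that the various powers of $\ln b-\ln a$ and of $p$ combine into exactly the claimed prefactor $(\ln b-\ln a)^{1-1/q}/p^{1/q}$; no conceptual difficulty beyond the correct choice of the H\"{o}lder split is anticipated.
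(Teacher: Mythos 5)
Your argument is correct and complete: the split $g(t)^{2}=g(t)^{2-p/q}\cdot g(t)^{p/q}$ with H\"{o}lder exponents $q/(q-1)$ and $q$, the identity $\int_{0}^{1}b^{rt}a^{r(1-t)}\,dt=L(a^{r},b^{r})$, and the two moment computations $\int_{0}^{1}t\,g(t)^{p}\,dt=\bigl[b^{p}-L(a^{p},b^{p})\bigr]/\bigl(p(\ln b-\ln a)\bigr)$ and its complement all check out and assemble into exactly the claimed prefactor $(\ln b-\ln a)^{1-1/q}/p^{1/q}$. The paper itself gives no proof of this statement --- it is quoted from the reference of Zhang, Ji and Qi along with the lemma used to establish it --- but your route (that lemma, followed by H\"{o}lder and the GA-convexity of $\left\vert f^{\prime }\right\vert ^{q}$) is precisely the one the surrounding text indicates.
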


In \cite{ZCZ10}, Zhang et al. established the following Hermite-Hadamard
type inequality for GA-convex (concave) functions:

\begin{theorem}
If $b>a>0$ and $f:\left[ a,b\right] \rightarrow 
\mathbb{R}
$ is a differentiable GA-convex (concave) function then%
\begin{equation*}
f\left( I(a,b)\right) \leq (\geq )\frac{1}{b-a}\dint\limits_{a}^{b}f(x)dx%
\leq (\geq )\frac{b-L(a,b)}{b-a}f(b)+\frac{L(a,b)-a}{b-a}f(a).
\end{equation*}
\end{theorem}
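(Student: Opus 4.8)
The plan is to treat the two inequalities separately, and in each case to reduce the GA-convex statement to an ordinary convexity statement. The key reduction is that the auxiliary function $g(u)=f(e^{u})$ is convex (and differentiable) on $[\ln a,\ln b]$ precisely because $f$ is a differentiable GA-convex function: writing $x=e^{p}$ and $y=e^{q}$ in the defining inequality $f(x^{t}y^{1-t})\le tf(x)+(1-t)f(y)$ yields $g(tp+(1-t)q)\le tg(p)+(1-t)g(q)$.

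For the right-hand inequality I would substitute $x=a^{1-t}b^{t}$, so that $\ln x=(1-t)\ln a+t\ln b$ and $dx=a^{1-t}b^{t}(\ln b-\ln a)\,dt$, and then apply GA-convexity under the integral sign:
\[
\frac{1}{b-a}\int_{a}^{b}f(x)\,dx\le\frac{\ln b-\ln a}{b-a}\int_{0}^{1}\bigl[tf(b)+(1-t)f(a)\bigr]a^{1-t}b^{t}\,dt .
\]
It then remains to evaluate the two elementary integrals $\int_{0}^{1}t\,a^{1-t}b^{t}\,dt$ and $\int_{0}^{1}(1-t)a^{1-t}b^{t}\,dt$ (an exponential together with a single integration by parts) and to simplify the resulting coefficients using $L(a,b)=(b-a)/(\ln b-\ln a)$. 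A short calculation matches the coefficient of $f(b)$ with $(b-L(a,b))/(b-a)$ and that of $f(a)$ with $(L(a,b)-a)/(b-a)$, giving the claimed upper bound.

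For the left-hand inequality I would use the tangent-line (support) form of convexity for $g$, rewritten in the original variable: since $g$ is convex and differentiable, $g(u)\ge g(u_{0})+g'(u_{0})(u-u_{0})$, which in terms of $x=e^{u}$ and $x_{0}=e^{u_{0}}$ becomes the support inequality
\[
f(x)\ge f(x_{0})+x_{0}f'(x_{0})\ln\frac{x}{x_{0}} .
\]
The decisive observation is that $\frac{1}{b-a}\int_{a}^{b}\ln x\,dx=\ln I(a,b)$, which follows from $\int_{a}^{b}\ln x\,dx=b\ln b-a\ln a-(b-a)$ together with the definition of the identric mean. Choosing $x_{0}=I(a,b)$ and integrating the support inequality over $[a,b]$ therefore annihilates the linear term, leaving exactly $\frac{1}{b-a}\int_{a}^{b}f(x)\,dx\ge f(I(a,b))$. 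The concave case is obtained throughout by reversing every inequality. The main obstacle I anticipate is purely computational, namely correctly evaluating the two weighted integrals in the right-hand estimate and reconciling them with $L(a,b)$; by contrast the conceptual heart, that $\ln I(a,b)$ is the mean value of $\ln x$ on $[a,b]$ so that the support line is tangent ``on average,'' makes the left-hand inequality fall out cleanly.
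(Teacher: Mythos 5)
Your proposal is correct, but be aware that the paper contains no proof of this statement to compare against: the theorem is quoted verbatim from the reference \cite{ZCZ10} as a known result, and the author never proves it. Judged on its own, your argument is sound and essentially complete. For the right-hand inequality, the substitution $x=a^{1-t}b^{t}$ together with $f(a^{1-t}b^{t})\le tf(b)+(1-t)f(a)$ reduces everything to the two integrals you name, and these do work out: with $c=\ln(b/a)$ one gets $\int_{0}^{1}a^{1-t}b^{t}\,dt=(b-a)/c=L(a,b)$ and $\int_{0}^{1}t\,a^{1-t}b^{t}\,dt=\bigl(b-L(a,b)\bigr)/c$, so after multiplying by $c/(b-a)$ the coefficient of $f(b)$ is $\bigl(b-L(a,b)\bigr)/(b-a)$ and that of $f(a)$ is $\bigl(L(a,b)-a\bigr)/(b-a)$, exactly as claimed. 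For the left-hand inequality, the identity $\frac{1}{b-a}\int_{a}^{b}\ln x\,dx=\frac{b\ln b-a\ln a}{b-a}-1=\ln I(a,b)$ is correct, the support inequality $f(x)\ge f(x_{0})+x_{0}f'(x_{0})\ln(x/x_{0})$ is the correct transcription of the tangent-line inequality for $g(u)=f(e^{u})$, and the choice $x_{0}=I(a,b)$ is legitimate because $a<I(a,b)<b$ (the chain of means displayed in the paper), so the linear term integrates to zero and the bound $f(I(a,b))$ falls out. Reversing all inequalities handles the concave case. The only cosmetic remark is that differentiability is not really needed for the right-hand estimate, only for your support-line form of the left-hand one (and even there a subgradient would do); but since the theorem hypothesizes differentiability, nothing is missing.
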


In \cite{I13}, the author proved the following identity and established some
new Hermite-Hadamard-like type inequalities for the geometrically convex
functions.

\begin{lemma}
\label{1.1}Let $f:I\subseteq \left( 0,\infty \right) \mathbb{\rightarrow }%
\mathbb{R}
$ be a differentiable mapping on $I^{\circ }$, and $a,b\in I$,with $a<b$. If 
$f^{\prime }\in L\left[ a,b\right] $, then%
\begin{equation*}
f\left( \sqrt{ab}\right) -\frac{1}{\ln b-\ln a}\dint\limits_{a}^{b}\frac{f(x)%
}{x}dx
\end{equation*}%
\begin{equation*}
=\frac{\left( \ln b-\ln a\right) }{4}\left[ a\dint\limits_{0}^{1}t\left( 
\frac{b}{a}\right) ^{\frac{t}{2}}f^{\prime }\left( a^{1-t}\left( ab\right) ^{%
\frac{t}{2}}\right) dt-b\dint\limits_{0}^{1}t\left( \frac{a}{b}\right) ^{%
\frac{t}{2}}f^{\prime }\left( b^{1-t}\left( ab\right) ^{\frac{t}{2}}\right)
dt\right] ,
\end{equation*}%
\begin{equation*}
\frac{f(a)+f(b)}{2}-\frac{1}{\ln b-\ln a}\dint\limits_{a}^{b}\frac{f(x)}{x}dx
\end{equation*}%
\begin{equation*}
=\frac{\left( \ln b-\ln a\right) }{2}\left[ a\dint\limits_{0}^{1}t\left( 
\frac{b}{a}\right) ^{t}f^{\prime }\left( a^{1-t}b^{t}\right)
dt-b\dint\limits_{0}^{1}t\left( \frac{a}{b}\right) ^{t}f^{\prime }\left(
b^{1-t}a^{t}\right) dt\right]
\end{equation*}
\end{lemma}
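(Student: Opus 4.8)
The plan is to verify both identities by working from the right-hand side: each of the two integrals there can be recast as a perfect-derivative expression, integrated by parts in $t$, and then reduced to a weighted integral of $f(x)/x$ through a logarithmic change of variable.

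First I would simplify the arguments of $f'$. For the first identity, the algebraic identity $a^{1-t}(ab)^{t/2}=a^{1-t/2}b^{t/2}$ shows that the leading prefactor of the first integrand coincides with the path itself, since $a\left(b/a\right)^{t/2}=a^{1-t/2}b^{t/2}$. Writing $u(t)=a^{1-t/2}b^{t/2}$, a direct computation gives $u'(t)=\tfrac12(\ln b-\ln a)\,u(t)$, so that $\frac{d}{dt}f(u(t))=\tfrac12(\ln b-\ln a)\,u(t)f'(u(t))$. Hence the first integrand equals $\frac{2}{\ln b-\ln a}\,t\,\frac{d}{dt}f(u(t))$ after extracting the constant, and integration by parts on $[0,1]$ produces the boundary value $f(u(1))=f(\sqrt{ab})$ together with $-\int_0^1 f(u(t))\,dt$. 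The second integral is handled identically with $v(t)=a^{t/2}b^{1-t/2}=b\left(a/b\right)^{t/2}$, where the sign of $v'(t)$ is reversed.

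Next I would evaluate the remaining ordinary integrals $\int_0^1 f(u(t))\,dt$ and $\int_0^1 f(v(t))\,dt$ by the change of variable $x=u(t)$ (respectively $x=v(t)$). Since $\ln x$ is affine in $t$, one has $dt=\frac{2}{\ln b-\ln a}\,\frac{dx}{x}$, so these become $\frac{2}{\ln b-\ln a}\int_a^{\sqrt{ab}}\frac{f(x)}{x}\,dx$ and $\frac{2}{\ln b-\ln a}\int_{\sqrt{ab}}^b\frac{f(x)}{x}\,dx$ respectively. Combining the two pieces, the boundary terms add to $2f(\sqrt{ab})$ while the two integrals fill out $\int_a^b\frac{f(x)}{x}\,dx$; multiplying by the overall factor $\frac{\ln b-\ln a}{4}$ reproduces exactly the left-hand side of the first identity.

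The second identity follows from the same scheme with the exponent $t/2$ replaced by $t$: the paths are $p(t)=a^{1-t}b^t$ and $q(t)=a^t b^{1-t}$, satisfying $p'(t)=(\ln b-\ln a)p(t)$ and $q'(t)=-(\ln b-\ln a)q(t)$, the boundary values at $t=1$ are $f(b)$ and $f(a)$, and each reduced integral equals $\frac{1}{\ln b-\ln a}\int_a^b\frac{f(x)}{x}\,dx$. Collecting the terms and multiplying by $\frac{\ln b-\ln a}{2}$ yields the arithmetic-mean form on the left. I expect no genuine obstacle here: the entire content is the observation that prefactors such as $a(b/a)^{t/2}$ are not extraneous weights but exactly the value of the geometric path, which makes each integrand a perfect derivative once the constant is pulled out; the remaining effort is careful bookkeeping of constants and signs together with the splitting of $\int_a^b$ at $\sqrt{ab}$.
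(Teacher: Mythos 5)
Your argument is correct: the key observation that the prefactor $a\left(b/a\right)^{t/2}$ (resp.\ $b\left(a/b\right)^{t/2}$, $a\left(b/a\right)^{t}$, $b\left(a/b\right)^{t}$) equals the geometric path itself turns each integrand into $t\,\frac{d}{dt}f(\cdot)$ up to a constant, and your integration by parts, sign bookkeeping, and logarithmic substitutions all check out. The paper itself does not reprove this lemma (it is quoted from reference \cite{I13}), but your route is exactly the standard one used there, so there is nothing further to flag.
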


In this paper, we will give concepts $s$-GA-convex functions in the first
and second sense and establish some new integral inequalities of
Hermite-Hadamard-like type for these classes of functions by using Lemma \ref%
{1.1}.

\section{Definitions of GA-$s$-convex functions in the first and second sense%
}

Now it is time to introduce two concepts, GA-$s$-convex functions in the
first and second sense.

\begin{definition}
Let $0<s\leq 1$. A function $f:I\subseteq \mathbb{R}_{+}\mathbb{\rightarrow R%
}$ is said to be a GA-$s$-convex (concave) function in the first sense on $I$
if%
\begin{equation*}
f(x^{t}y^{1-t})\leq (\geq )\ t^{s}f(x)+(1-t^{s})f(y)
\end{equation*}%
holds for all $x,y\in I$ and $t\in \left[ 0,1\right] $.
\end{definition}

\begin{definition}[{\protect\cite[Definition 2.1]{SYQ13}}]
Let $0<s\leq 1$. A function $f:I\subseteq \mathbb{R}_{+}\mathbb{\rightarrow R%
}$ is said to be a GA-$s$-convex (concave) function in the second sense on $I
$ if%
\begin{equation*}
f(x^{t}y^{1-t})\leq (\geq )\ t^{s}f(x)+(1-t)^{s}f(y)
\end{equation*}%
holds for all $x,y\in I$ and $t\in \left[ 0,1\right] $.
\end{definition}

It is clear that when $s=1$, GA-$s$-convex functions in the first and second
sense become GA-convex functions.

\section{Inequalities for GA-$s$-convex functions in the first and second
sense}

Now we are in a position to establish some inequalities of Hermite--Hadamard
type for GA-$s$-convex functions in the first and second sense

\begin{theorem}
\label{3.1} Let $0<s\leq 1$. Suppose that $f:I\subseteq \left( 0,\infty
\right) \mathbb{\rightarrow }%
\mathbb{R}
$ is GA-$s$-convex function in the first sense and $a,b\in I$ with $a<b$. If 
$f\in L\left[ a,b\right] $, then one has the inequalities:%
\begin{equation}
f\left( \sqrt{ab}\right) \leq \frac{1}{\ln b-\ln a}\dint\limits_{a}^{b}\frac{%
f(x)}{x}dx\leq \frac{f(a)+sf(b)}{s+1}  \label{3-1}
\end{equation}
\end{theorem}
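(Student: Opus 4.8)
The plan is to collapse both the middle term and the outer terms onto the single auxiliary integral $M := \int_0^1 f(a^t b^{1-t})\,dt$ by means of a logarithmic change of variables, and then to invoke the first-sense GA-$s$-convexity inequality directly. First I would substitute $x = a^t b^{1-t}$, i.e. $\ln x = t\ln a + (1-t)\ln b$, so that $dx/x = (\ln a - \ln b)\,dt$ and the endpoints $x=a$, $x=b$ correspond to $t=1$, $t=0$. After flipping the limits this produces the identity
\[
\frac{1}{\ln b - \ln a}\int_a^b \frac{f(x)}{x}\,dx = \int_0^1 f(a^t b^{1-t})\,dt = M,
\]
which turns the whole statement into a two-sided estimate for $M$.

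For the right-hand inequality I would apply the definition of GA-$s$-convexity in the first sense with $x=a$ and $y=b$, obtaining the pointwise bound $f(a^t b^{1-t}) \leq t^s f(a) + (1-t^s)f(b)$. Integrating over $[0,1]$ and using $\int_0^1 t^s\,dt = \tfrac{1}{s+1}$ (hence $\int_0^1 (1-t^s)\,dt = \tfrac{s}{s+1}$) yields exactly $\frac{f(a)+sf(b)}{s+1}$, which is the asserted upper bound.

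For the left-hand inequality the key observation is that $\sqrt{ab}$ is the geometric mean of the two points $a^t b^{1-t}$ and $a^{1-t} b^t$, since $(a^t b^{1-t})^{1/2}(a^{1-t}b^t)^{1/2} = a^{1/2}b^{1/2} = \sqrt{ab}$ for every $t$. Applying the definition to these two points with exponent $1/2$ gives $f(\sqrt{ab}) \leq 2^{-s} f(a^t b^{1-t}) + (1-2^{-s}) f(a^{1-t} b^t)$. Integrating in $t$ and using the symmetry $t \mapsto 1-t$, which shows $\int_0^1 f(a^{1-t}b^t)\,dt = \int_0^1 f(a^t b^{1-t})\,dt = M$, the two integrals merge with coefficients $2^{-s} + (1-2^{-s}) = 1$, leaving $f(\sqrt{ab}) \leq M$.

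The main subtlety to watch is the asymmetry of the first-sense definition, whose weights are $t^s$ and $1-t^s$ rather than $t^s$ and $(1-t)^s$. On the right this asymmetry is precisely what manufactures the lopsided bound $\frac{f(a)+sf(b)}{s+1}$. On the left it would be a genuine obstacle, because the single-point estimate $f(\sqrt{ab}) \leq 2^{-s}f(a^t b^{1-t}) + (1-2^{-s})f(a^{1-t}b^t)$ is not symmetric in the two sampled values; it is only the symmetrization furnished by integrating against $dt$ and the change $t\mapsto 1-t$ that renders the unequal coefficients harmless. That integration-and-reflection step is the one I would carry out most carefully, together with the bookkeeping of orientation in the initial substitution.
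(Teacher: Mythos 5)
Your proposal is correct and follows essentially the same route as the paper: the right-hand bound by integrating $f(a^tb^{1-t})\leq t^{s}f(a)+(1-t^{s})f(b)$ together with the change of variable $x=a^{t}b^{1-t}$, and the left-hand bound by applying the first-sense definition at $t=1/2$ to the pair $a^{t}b^{1-t}$, $a^{1-t}b^{t}$ and integrating, with the reflection $t\mapsto 1-t$ making the unequal weights $2^{-s}$ and $1-2^{-s}$ sum to $1$. The only difference from the paper is the (immaterial) ordering of the two sampled points in that last step.
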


\begin{proof}
As $f$ is GA-$s$-convex function in the first sense, we have, for all $%
x,y\in I$%
\begin{equation}
f\left( \sqrt{xy}\right) \leq \frac{1}{2^{s}}f(x)+\left( 1-\frac{1}{2^{s}}%
\right) f(y).  \label{3-1a}
\end{equation}%
Now, let $x=a^{1-t}b^{t}$ and $y=a^{t}b^{1-t}$ with $t\in \left[ 0,1\right] $%
. Then we get by (\ref{3-1a}) that:%
\begin{equation*}
f\left( \sqrt{ab}\right) \leq \frac{1}{2^{s}}f(a^{1-t}b^{t})+\left( 1-\frac{1%
}{2^{s}}\right) f(a^{t}b^{1-t})
\end{equation*}%
for all $t\in \left[ 0,1\right] $. Integrating this inequality on $\left[ 0,1%
\right] $, we deduce the first part of (\ref{3-1}).

Secondly, we observe that for all $t\in \left[ 0,1\right] $%
\begin{equation*}
f(a^{t}b^{1-t})\leq t^{s}f(a)+(1-t^{s})f(b).
\end{equation*}%
Integrating this inequality on $\left[ 0,1\right] $, we get%
\begin{equation*}
\dint\limits_{0}^{1}f(a^{t}b^{1-t})dt\leq \frac{f(a)+sf(b)}{s+1}.
\end{equation*}%
As the change of variable $x=a^{t}b^{1-t}$ gives us that%
\begin{equation*}
\dint\limits_{0}^{1}f(a^{t}b^{1-t})dt=\frac{1}{\ln b-\ln a}%
\dint\limits_{a}^{b}\frac{f(x)}{x}dx,
\end{equation*}%
the second inequality in (\ref{3-1}) is proved.
\end{proof}

Similarly to Theorem \ref{3.1}, we will give the following theorem for GA-$s$%
-convex function in the second sense:

\begin{theorem}
Suppose that $f:I\subseteq \left( 0,\infty \right) \mathbb{\rightarrow }%
\mathbb{R}
$ is GA-$s$-convex function in the second sense and $a,b\in I$ with $a<b$.
If $f\in L\left[ a,b\right] $, then one has the inequalities:%
\begin{equation}
2^{s-1}f\left( \sqrt{ab}\right) \leq \frac{1}{\ln b-\ln a}%
\dint\limits_{a}^{b}\frac{f(x)}{x}dx\leq \frac{f(a)+f(b)}{s+1}  \label{3-2}
\end{equation}
\end{theorem}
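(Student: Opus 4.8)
The plan is to mirror the two-part argument from the proof of Theorem \ref{3.1}, exploiting the fact that the second-sense definition yields a \emph{symmetric} midpoint bound.

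For the left inequality, I would first specialize the defining inequality $f(x^{t}y^{1-t})\leq t^{s}f(x)+(1-t)^{s}f(y)$ at $t=\tfrac12$, which gives
\[
f\left(\sqrt{xy}\right)\leq \frac{1}{2^{s}}\left[f(x)+f(y)\right]
\]
for all $x,y\in I$. The key point is that the coefficient of each term is now $2^{-s}$ (in contrast to the first sense, where one coefficient was $1-2^{-s}$), so $f(x)$ and $f(y)$ enter symmetrically. Substituting $x=a^{1-t}b^{t}$ and $y=a^{t}b^{1-t}$, and observing that $\sqrt{xy}=\sqrt{ab}$ is independent of $t$, I would integrate over $t\in[0,1]$. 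The map $t\mapsto 1-t$ shows $\int_{0}^{1}f(a^{1-t}b^{t})\,dt=\int_{0}^{1}f(a^{t}b^{1-t})\,dt$, and the change of variable $x=a^{t}b^{1-t}$ reduces each of these to $\frac{1}{\ln b-\ln a}\int_{a}^{b}\frac{f(x)}{x}\,dx$. This yields $f(\sqrt{ab})\leq 2^{1-s}\cdot\frac{1}{\ln b-\ln a}\int_{a}^{b}\frac{f(x)}{x}\,dx$, and multiplying through by $2^{s-1}$ produces the claimed lower bound.

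For the right inequality, I would apply the definition directly with $x=a$ and $y=b$, obtaining $f(a^{t}b^{1-t})\leq t^{s}f(a)+(1-t)^{s}f(b)$ for every $t\in[0,1]$. Integrating over $[0,1]$ and using $\int_{0}^{1}t^{s}\,dt=\int_{0}^{1}(1-t)^{s}\,dt=\frac{1}{s+1}$ gives $\int_{0}^{1}f(a^{t}b^{1-t})\,dt\leq\frac{f(a)+f(b)}{s+1}$. The same change of variable $x=a^{t}b^{1-t}$ identifies the left-hand integral with $\frac{1}{\ln b-\ln a}\int_{a}^{b}\frac{f(x)}{x}\,dx$, completing the estimate.

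Since every step is a routine adaptation of Theorem \ref{3.1}, there is no substantial obstacle; the only delicate point is the bookkeeping of the factor $2^{-s}$. Here the symmetry lets the two halves combine into $2\cdot 2^{-s}=2^{1-s}$, which is precisely what produces the constant $2^{s-1}$ on the left, whereas in the first sense the asymmetric midpoint bound prevents such a clean combination. Verifying this constant, together with confirming that the swapped limits $a\leftrightarrow b$ in the change-of-variable integral cancel against the sign of $\ln a-\ln b$, is the part I would check most carefully.
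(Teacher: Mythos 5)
Your proposal is correct and follows essentially the same route as the paper: the symmetric midpoint bound $f(\sqrt{xy})\leq 2^{-s}\left[f(x)+f(y)\right]$ applied to $x=a^{1-t}b^{t}$, $y=a^{t}b^{1-t}$ and integrated over $t\in[0,1]$ for the left inequality, and direct integration of $f(a^{t}b^{1-t})\leq t^{s}f(a)+(1-t)^{s}f(b)$ with the change of variable $x=a^{t}b^{1-t}$ for the right one. Your bookkeeping of the factor $2\cdot 2^{-s}=2^{1-s}$ and of the sign in the change of variable is exactly what the paper does implicitly.
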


\begin{proof}
As $f$ is GA-$s$-convex function in the second sense, we have, for all $%
x,y\in I$%
\begin{equation}
f\left( \sqrt{xy}\right) \leq \frac{f(x)+f(y)}{2^{s}}.  \label{3-2a}
\end{equation}%
Now, let $x=a^{1-t}b^{t}$ and $y=a^{t}b^{1-t}$ with $t\in \left[ 0,1\right] $%
. Then we get by (\ref{3-2a}) that:%
\begin{equation*}
f\left( \sqrt{ab}\right) \leq \frac{f(a^{1-t}b^{t})+f(a^{t}b^{1-t})}{2^{s}}
\end{equation*}%
for all $t\in \left[ 0,1\right] $. Integrating this inequality on $\left[ 0,1%
\right] $, we deduce the first part of (\ref{3-2}).

Secondly, we observe that for all $t\in \left[ 0,1\right] $%
\begin{equation*}
f(a^{t}b^{1-t})\leq t^{s}f(a)+(1-t)^{s}f(b).
\end{equation*}%
Integrating this inequality on $\left[ 0,1\right] $, we get%
\begin{equation*}
\dint\limits_{0}^{1}f(a^{t}b^{1-t})dt\leq \frac{f(a)+f(b)}{s+1}.
\end{equation*}%
As the change of variable $x=a^{t}b^{1-t}$ gives us that%
\begin{equation*}
\dint\limits_{0}^{1}f(a^{t}b^{1-t})dt=\frac{1}{\ln b-\ln a}%
\dint\limits_{a}^{b}\frac{f(x)}{x}dx,
\end{equation*}%
the second inequality in (\ref{3-2}) is proved.
\end{proof}

\begin{remark}
The constant $k=1/(s+1)$ for $s\in \left( 0,1\right] $ is the best possible
in the second inequality in (\ref{3-2}). Indeed, as the mapping $f:\left[ a,b%
\right] \rightarrow \left[ a,b\right] $ given $f(x)=s+1$, $0<a<b$, is GA-$s$%
-convex in the second sense and 
\begin{equation*}
\frac{1}{\ln b-\ln a}\dint\limits_{a}^{b}\frac{f(x)}{x}dx=s+1=\frac{f(a)+f(b)%
}{s+1}
\end{equation*}
\end{remark}

\begin{theorem}
\label{2.1}Let $f:I\subseteq \left( 0,\infty \right) \mathbb{\rightarrow R}$
be differentiable on $I^{\circ }$, and $a,b\in I^{\circ }$ with $a<b$ and $%
f^{\prime }\in L\left[ a,b\right] .$

a) If $\left\vert f^{\prime }\right\vert ^{q}$ is GA-$s$-convex function in
the second sense on $\left[ a,b\right] $ for $q\geq 1$ and $s\in \left( 0,1%
\right] ,$ then%
\begin{equation}
\left\vert \frac{f(a)+f(b)}{2}-\frac{1}{\ln b-\ln a}\dint\limits_{a}^{b}%
\frac{f(x)}{x}dx\right\vert   \label{3-3}
\end{equation}%
\begin{eqnarray*}
&\leq &\ln \left( \frac{b}{a}\right) \left( \frac{1}{2}\right) ^{2-\frac{1}{q%
}}\left[ a\left\{ c_{1}(s,q)\left\vert f^{\prime }\left( a\right)
\right\vert ^{q}+c_{2}(s,q)\left\vert f^{\prime }\left( b\right) \right\vert
^{q}\right\} ^{\frac{1}{q}}\right.  \\
&&\left. +b\left\{ c_{3}(s,q)\left\vert f^{\prime }\left( b\right)
\right\vert ^{q}+c_{4}(s,q)\left\vert f^{\prime }\left( a\right) \right\vert
^{q}\right\} ^{\frac{1}{q}}\right] 
\end{eqnarray*}%
\begin{equation}
\left\vert f\left( \sqrt{ab}\right) -\frac{1}{\ln b-\ln a}%
\dint\limits_{a}^{b}\frac{f(x)}{x}dx\right\vert   \label{3-4}
\end{equation}%
\begin{eqnarray*}
&\leq &\ln \left( \frac{b}{a}\right) \left( \frac{1}{2}\right) ^{3-\frac{1}{q%
}}\left[ a\left\{ c_{1}(s,q/2)\left\vert f^{\prime }\left( a\right)
\right\vert ^{q}+c_{2}(s,q/2)\left\vert f^{\prime }\left( \sqrt{ab}\right)
\right\vert ^{q}\right\} ^{\frac{1}{q}}\right.  \\
&&\left. +b\left\{ c_{3}(s,q/2)\left\vert f^{\prime }\left( b\right)
\right\vert ^{q}+c_{4}(s,q/2)\left\vert f^{\prime }\left( \sqrt{ab}\right)
\right\vert ^{q}\right\} ^{\frac{1}{q}}\right] 
\end{eqnarray*}%
where 
\begin{eqnarray}
c_{1}(s,q) &=&\dint\limits_{0}^{1}t\left( 1-t\right) ^{s}\left( \frac{b}{a}%
\right) ^{qt}dt,\ c_{2}(s,q)=\dint\limits_{0}^{1}t^{s+1}\left( \frac{b}{a}%
\right) ^{qt}dt,  \label{3-41} \\
c_{3}(s,q) &=&\dint\limits_{0}^{1}t\left( 1-t\right) ^{s}\left( \frac{a}{b}%
\right) ^{qt}dt,\ c_{4}(s,q)=\dint\limits_{0}^{1}t^{s+1}\left( \frac{a}{b}%
\right) ^{qt}dt,  \notag
\end{eqnarray}%
b) If $\left\vert f^{\prime }\right\vert ^{q}$ is GA-$s$-convex function in
the first sense on $\left[ a,b\right] $ for $q\geq 1$ and $s\in \left( 0,1%
\right] ,$ then%
\begin{equation}
\left\vert \frac{f(a)+f(b)}{2}-\frac{1}{\ln b-\ln a}\dint\limits_{a}^{b}%
\frac{f(x)}{x}dx\right\vert   \label{3-5}
\end{equation}%
\begin{eqnarray*}
&\leq &\ln \left( \frac{b}{a}\right) \left( \frac{1}{2}\right) ^{2-\frac{1}{q%
}}\left[ a\left\{ c_{5}(s,q)\left\vert f^{\prime }\left( a\right)
\right\vert ^{q}+c_{2}(s,q)\left\vert f^{\prime }\left( b\right) \right\vert
^{q}\right\} ^{\frac{1}{q}}\right.  \\
&&\left. +b\left\{ c_{6}(s,q)\left\vert f^{\prime }\left( b\right)
\right\vert ^{q}+c_{4}(s,q)\left\vert f^{\prime }\left( a\right) \right\vert
^{q}\right\} ^{\frac{1}{q}}\right] 
\end{eqnarray*}%
\begin{equation}
\left\vert f\left( \sqrt{ab}\right) -\frac{1}{\ln b-\ln a}%
\dint\limits_{a}^{b}\frac{f(x)}{x}dx\right\vert   \label{3-6}
\end{equation}%
\begin{eqnarray*}
&\leq &\ln \left( \frac{b}{a}\right) \left( \frac{1}{2}\right) ^{3-\frac{1}{q%
}}\left[ a\left\{ c_{5}(s,q/2)\left\vert f^{\prime }\left( a\right)
\right\vert ^{q}+c_{2}(s,q/2)\left\vert f^{\prime }\left( \sqrt{ab}\right)
\right\vert ^{q}\right\} ^{\frac{1}{q}}\right.  \\
&&\left. +b\left\{ c_{6}(s,q/2)\left\vert f^{\prime }\left( b\right)
\right\vert ^{q}+c_{4}(s,q/2)\left\vert f^{\prime }\left( \sqrt{ab}\right)
\right\vert ^{q}\right\} ^{\frac{1}{q}}\right] ,
\end{eqnarray*}%
where%
\begin{equation}
c_{5}(s,q)=\dint\limits_{0}^{1}t\left( 1-t^{s}\right) \left( \frac{b}{a}%
\right) ^{qt}dt,\ c_{6}(s,q)=\dint\limits_{0}^{1}t\left( 1-t^{s}\right)
\left( \frac{a}{b}\right) ^{qt}dt.  \label{3-61}
\end{equation}
\end{theorem}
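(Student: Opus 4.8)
The plan is to apply Lemma \ref{1.1} to rewrite each of the four left-hand sides as a prefactor times the difference of two integrals of $f^{\prime }$ taken along geometric paths, pass to absolute values via the triangle inequality, and then bound each integral by the power-mean inequality followed by the relevant GA-$s$-convexity hypothesis. Throughout I will use the weighted power-mean inequality in the form $\int_{0}^{1}w(t)\left\vert h(t)\right\vert dt\leq \left( \int_{0}^{1}w(t)dt\right) ^{1-1/q}\left( \int_{0}^{1}w(t)\left\vert h(t)\right\vert ^{q}dt\right) ^{1/q}$ valid for $q\geq 1$ and $w\geq 0$, always with the weight $w(t)=t$.

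For inequality (\ref{3-3}) I start from the second identity in Lemma \ref{1.1}, which expresses $\frac{f(a)+f(b)}{2}-\frac{1}{\ln b-\ln a}\int_{a}^{b}\frac{f(x)}{x}dx$ as $\frac{1}{2}\ln (b/a)$ times the difference of $a\int_{0}^{1}t(b/a)^{t}f^{\prime }(a^{1-t}b^{t})dt$ and $b\int_{0}^{1}t(a/b)^{t}f^{\prime }(b^{1-t}a^{t})dt$. After the triangle inequality it suffices to estimate each of $\int_{0}^{1}t(b/a)^{t}\left\vert f^{\prime }(a^{1-t}b^{t})\right\vert dt$ and its companion. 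Applying the power-mean inequality with $w(t)=t$ produces the common factor $\left( \int_{0}^{1}t\,dt\right) ^{1-1/q}=(1/2)^{1-1/q}$ and leaves integrals of the form $\int_{0}^{1}t(b/a)^{qt}\left\vert f^{\prime }(a^{1-t}b^{t})\right\vert ^{q}dt$. Writing $a^{1-t}b^{t}=b^{t}a^{1-t}$ and invoking GA-$s$-convexity in the second sense for $\left\vert f^{\prime }\right\vert ^{q}$ gives $\left\vert f^{\prime }(b^{t}a^{1-t})\right\vert ^{q}\leq t^{s}\left\vert f^{\prime }(b)\right\vert ^{q}+(1-t)^{s}\left\vert f^{\prime }(a)\right\vert ^{q}$; substituting and integrating termwise reproduces $c_{1}(s,q),c_{2}(s,q)$ from the first integral and $c_{3}(s,q),c_{4}(s,q)$ from the second, as defined in (\ref{3-41}). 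Collecting the prefactors, $\frac{1}{2}\ln (b/a)\cdot (1/2)^{1-1/q}=\ln (b/a)(1/2)^{2-1/q}$, yields (\ref{3-3}).

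For inequality (\ref{3-4}) the argument is identical but begins from the first identity in Lemma \ref{1.1}, whose weights carry the exponent $t/2$. The one step requiring care is the identification of endpoints: since $a^{1-t}(ab)^{t/2}=a^{1-t/2}b^{t/2}=(\sqrt{ab})^{t}a^{1-t}$, the relevant geometric mean has endpoints $\sqrt{ab}$ and $a$ with convexity parameter $t$, while pulling the weight $(b/a)^{t/2}$ into the $q$-th power leaves $(b/a)^{qt/2}$. This is precisely why the constants appear as $c_{i}(s,q/2)$ rather than $c_{i}(s,q)$, with $\left\vert f^{\prime }(b)\right\vert $ replaced by $\left\vert f^{\prime }(\sqrt{ab})\right\vert $; the companion integral uses $b^{1-t}(ab)^{t/2}=(\sqrt{ab})^{t}b^{1-t}$ analogously. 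The prefactor now becomes $\frac{1}{4}\ln (b/a)\cdot (1/2)^{1-1/q}=\ln (b/a)(1/2)^{3-1/q}$, giving (\ref{3-4}).

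Part (b) is obtained by repeating both arguments verbatim, replacing the second-sense estimate $t^{s}\left\vert f^{\prime }(x)\right\vert ^{q}+(1-t)^{s}\left\vert f^{\prime }(y)\right\vert ^{q}$ by the first-sense estimate $t^{s}\left\vert f^{\prime }(x)\right\vert ^{q}+(1-t^{s})\left\vert f^{\prime }(y)\right\vert ^{q}$. The only effect is that the coefficient attached to the $(1-t^{s})$ term produces $\int_{0}^{1}t(1-t^{s})(b/a)^{qt}dt=c_{5}(s,q)$ and $\int_{0}^{1}t(1-t^{s})(a/b)^{qt}dt=c_{6}(s,q)$ from (\ref{3-61}), while the constants $c_{2},c_{4}$ attached to the $t^{s}$ terms are unchanged; this gives (\ref{3-5}) and, after the $q/2$ and $\sqrt{ab}$ modifications of the preceding paragraph, (\ref{3-6}). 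The only genuinely delicate part of the whole argument is the bookkeeping in the third paragraph, namely rewriting each argument such as $a^{1-t}(ab)^{t/2}$ as a weighted geometric mean of two of the points $a,b,\sqrt{ab}$ so that the correct convexity parameter and the correct exponent ($q$ or $q/2$) enter; the remaining work consists only of the elementary termwise integrations defining $c_{1},\dots ,c_{6}$.
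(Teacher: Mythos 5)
Your proof is correct and follows essentially the same route as the paper's: Lemma \ref{1.1}, the triangle inequality, the weighted power-mean inequality with weight $w(t)=t$ contributing the factor $(1/2)^{1-1/q}$, then the GA-$s$-convexity hypothesis applied after rewriting $a^{1-t}(ab)^{t/2}=(\sqrt{ab})^{t}a^{1-t}$ and $b^{1-t}(ab)^{t/2}=(\sqrt{ab})^{t}b^{1-t}$, followed by termwise integration to produce $c_{1},\dots ,c_{6}$. Your bookkeeping in the $\sqrt{ab}$ case even lands on the constants $c_{3}(s,q/2),c_{4}(s,q/2)$ stated in the theorem, where the paper's own final display for (\ref{3-4}) misprints them as $c_{5},c_{6}$.
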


\begin{proof}
a) (1) Since $\left\vert f^{\prime }\right\vert ^{q}$ is GA-$s$-convex
function in the second sense on $\left[ a,b\right] $, from lemma \ref{1.1}
and power mean inequality, we have%
\begin{eqnarray*}
&&\left\vert \frac{f(a)+f(b)}{2}-\frac{1}{\ln b-\ln a}\dint\limits_{a}^{b}%
\frac{f(x)}{x}dx\right\vert  \\
&\leq &\frac{\ln \left( \frac{b}{a}\right) }{2}\left[ a\dint\limits_{0}^{1}t%
\left( \frac{b}{a}\right) ^{t}\left\vert f^{\prime }\left(
a^{1-t}b^{t}\right) \right\vert dt+b\dint\limits_{0}^{1}t\left( \frac{a}{b}%
\right) ^{t}\left\vert f^{\prime }\left( b^{1-t}a^{t}\right) \right\vert dt%
\right] 
\end{eqnarray*}%
\begin{eqnarray}
&\leq &\frac{a\ln \left( \frac{b}{a}\right) }{2}\left(
\dint\limits_{0}^{1}tdt\right) ^{1-\frac{1}{q}}\left(
\dint\limits_{0}^{1}t\left( \frac{b}{a}\right) ^{qt}\left\vert f^{\prime
}\left( a^{1-t}b^{t}\right) \right\vert ^{q}dt\right) ^{\frac{1}{q}}  \notag
\\
&&+\frac{b\ln \left( \frac{b}{a}\right) }{2}\left(
\dint\limits_{0}^{1}tdt\right) ^{1-\frac{1}{q}}\left(
\dint\limits_{0}^{1}t\left( \frac{a}{b}\right) ^{qt}\left\vert f^{\prime
}\left( b^{1-t}a^{t}\right) \right\vert ^{q}dt\right) ^{\frac{1}{q}}
\label{3-3a}
\end{eqnarray}%
\begin{eqnarray*}
&\leq &\frac{a\ln \left( \frac{b}{a}\right) }{2}\left( \frac{1}{2}\right)
^{1-\frac{1}{q}}\left( \dint\limits_{0}^{1}t\left( \frac{b}{a}\right)
^{qt}\left( \left( 1-t\right) ^{s}\left\vert f^{\prime }\left( a\right)
\right\vert ^{q}+t^{s}\left\vert f^{\prime }\left( b\right) \right\vert
^{q}\right) dt\right) ^{\frac{1}{q}} \\
&&+\frac{b\ln \left( \frac{b}{a}\right) }{2}\left( \frac{1}{2}\right) ^{1-%
\frac{1}{q}}\left( \dint\limits_{0}^{1}t\left( \frac{a}{b}\right)
^{qt}\left( \left( 1-t\right) ^{s}\left\vert f^{\prime }\left( b\right)
\right\vert ^{q}+t^{s}\left\vert f^{\prime }\left( a\right) \right\vert
^{q}\right) dt\right) ^{\frac{1}{q}}
\end{eqnarray*}%
\begin{eqnarray*}
&\leq &a\ln \left( \frac{b}{a}\right) \left( \frac{1}{2}\right) ^{2-\frac{1}{%
q}}\left\{ c_{1}(s,q)\left\vert f^{\prime }\left( a\right) \right\vert
^{q}+c_{2}(s,q)\left\vert f^{\prime }\left( b\right) \right\vert
^{q}\right\} ^{\frac{1}{q}} \\
&&+b\ln \left( \frac{b}{a}\right) \left( \frac{1}{2}\right) ^{2-\frac{1}{q}%
}\left\{ c_{3}(s,q)\left\vert f^{\prime }\left( b\right) \right\vert
^{q}+c_{4}(s,q)\left\vert f^{\prime }\left( a\right) \right\vert
^{q}\right\} ^{\frac{1}{q}}.
\end{eqnarray*}

(2) Since $\left\vert f^{\prime }\right\vert ^{q}$ is GA-$s$-convex function
in the second sense on $\left[ a,b\right] $, from lemma \ref{2.1} and power
mean inequality, we have%
\begin{eqnarray*}
&&\left\vert f\left( \sqrt{ab}\right) -\frac{1}{\ln b-\ln a}%
\dint\limits_{a}^{b}\frac{f(x)}{x}dx\right\vert  \\
&\leq &\frac{\ln \frac{b}{a}}{4}\left[ a\dint\limits_{0}^{1}t\left( \frac{b}{%
a}\right) ^{\frac{t}{2}}\left\vert f^{\prime }\left( a^{1-t}\left( ab\right)
^{\frac{t}{2}}\right) \right\vert dt+b\dint\limits_{0}^{1}t\left( \frac{a}{b}%
\right) ^{\frac{t}{2}}\left\vert f^{\prime }\left( b^{1-t}\left( ab\right) ^{%
\frac{t}{2}}\right) \right\vert dt\right] 
\end{eqnarray*}%
\begin{eqnarray}
&\leq &\frac{a\ln \frac{b}{a}}{4}\left( \dint\limits_{0}^{1}tdt\right) ^{1-%
\frac{1}{q}}\left( \dint\limits_{0}^{1}t\left( \frac{b}{a}\right) ^{\frac{qt%
}{2}}\left\vert f^{\prime }\left( a^{1-t}\left( ab\right) ^{\frac{t}{2}%
}\right) \right\vert ^{q}dt\right) ^{\frac{1}{q}}  \notag \\
&&+\frac{b\ln \frac{b}{a}}{4}\left( \dint\limits_{0}^{1}tdt\right) ^{1-\frac{%
1}{q}}\left( \dint\limits_{0}^{1}t\left( \frac{a}{b}\right) ^{\frac{qt}{2}%
}\left\vert f^{\prime }\left( b^{1-t}\left( ab\right) ^{\frac{t}{2}}\right)
\right\vert ^{q}dt\right) ^{\frac{1}{q}}  \label{3-4a}
\end{eqnarray}%
\begin{eqnarray*}
&\leq &\frac{a\ln \frac{b}{a}}{4}\left( \frac{1}{2}\right) ^{1-\frac{1}{q}%
}\left( \dint\limits_{0}^{1}t\left( \frac{b}{a}\right) ^{\frac{qt}{2}}\left(
\left( 1-t\right) ^{s}\left\vert f^{\prime }\left( a\right) \right\vert
^{q}+t^{s}\left\vert f^{\prime }\left( \sqrt{ab}\right) \right\vert
^{q}\right) dt\right) ^{\frac{1}{q}} \\
&&+\frac{b\ln \frac{b}{a}}{4}\left( \frac{1}{2}\right) ^{1-\frac{1}{q}%
}\left( \dint\limits_{0}^{1}t\left( \frac{a}{b}\right) ^{\frac{qt}{2}}\left(
\left( 1-t\right) ^{s}\left\vert f^{\prime }\left( b\right) \right\vert
^{q}+t^{s}\left\vert f^{\prime }\left( \sqrt{ab}\right) \right\vert
^{q}\right) dt\right) ^{\frac{1}{q}}
\end{eqnarray*}%
\begin{eqnarray*}
&\leq &\ln \left( \frac{b}{a}\right) \left( \frac{1}{2}\right) ^{3-\frac{1}{q%
}}\left[ a\left\{ c_{1}(s,q/2)\left\vert f^{\prime }\left( a\right)
\right\vert ^{q}+c_{2}(s,q/2)\left\vert f^{\prime }\left( \sqrt{ab}\right)
\right\vert ^{q}\right\} ^{\frac{1}{q}}\right.  \\
&&\left. +b\left\{ c_{5}(s,q/2)\left\vert f^{\prime }\left( b\right)
\right\vert ^{q}+c_{6}(s,q/2)\left\vert f^{\prime }\left( \sqrt{ab}\right)
\right\vert ^{q}\right\} ^{\frac{1}{q}}\right] ,
\end{eqnarray*}%
b) (1) Since $\left\vert f^{\prime }\right\vert ^{q}$ is GA-$s$-convex
function in the first sense on $\left[ a,b\right] $, from the inequality (%
\ref{3-3a}), we have%
\begin{equation*}
\left\vert \frac{f(a)+f(b)}{2}-\frac{1}{\ln b-\ln a}\dint\limits_{a}^{b}%
\frac{f(x)}{x}dx\right\vert 
\end{equation*}%
\begin{eqnarray*}
&\leq &\frac{a\ln \left( \frac{b}{a}\right) }{2}\left( \frac{1}{2}\right)
^{1-\frac{1}{q}}\left( \dint\limits_{0}^{1}t\left( \frac{b}{a}\right)
^{qt}\left( \left( 1-t^{s}\right) \left\vert f^{\prime }\left( a\right)
\right\vert ^{q}+t^{s}\left\vert f^{\prime }\left( b\right) \right\vert
^{q}\right) dt\right) ^{\frac{1}{q}} \\
&&+\frac{b\ln \left( \frac{b}{a}\right) }{2}\left( \frac{1}{2}\right) ^{1-%
\frac{1}{q}}\left( \dint\limits_{0}^{1}t\left( \frac{a}{b}\right)
^{qt}\left( \left( 1-t^{s}\right) \left\vert f^{\prime }\left( b\right)
\right\vert ^{q}+t^{s}\left\vert f^{\prime }\left( a\right) \right\vert
^{q}\right) dt\right) ^{\frac{1}{q}}
\end{eqnarray*}%
\begin{eqnarray*}
&\leq &a\ln \left( \frac{b}{a}\right) \left( \frac{1}{2}\right) ^{2-\frac{1}{%
q}}\left\{ c_{5}(s,q)\left\vert f^{\prime }\left( a\right) \right\vert
^{q}+c_{2}(s,q)\left\vert f^{\prime }\left( b\right) \right\vert
^{q}\right\} ^{\frac{1}{q}} \\
&&+b\ln \left( \frac{b}{a}\right) \left( \frac{1}{2}\right) ^{2-\frac{1}{q}%
}\left\{ c_{6}(s,q)\left\vert f^{\prime }\left( b\right) \right\vert
^{q}+c_{4}(s,q)\left\vert f^{\prime }\left( a\right) \right\vert
^{q}\right\} ^{\frac{1}{q}}.
\end{eqnarray*}

(2) Since $\left\vert f^{\prime }\right\vert ^{q}$ is GA-$s$-convex function
in the first sense on $\left[ a,b\right] $, the inequality (\ref{3-6}) is
easily obtained by using the inequality (\ref{3-4a}).
\end{proof}

If taking $s=1$ in Theorem \ref{2.1}, we can derive the following
inequalities for GA-convex.

\begin{corollary}
\label{2.1a}Let $f:I\subseteq \mathbb{R}_{+}\mathbb{\rightarrow R}$ be
differentiable on $I^{\circ }$, and $a,b\in I^{\circ }$ with $a<b$ and $%
f^{\prime }\in L\left[ a,b\right] .$ If $\left\vert f^{\prime }\right\vert
^{q}$ is GA-convex on $\left[ a,b\right] $ for $q\geq 1$, then%
\begin{eqnarray}
&&\left\vert \frac{f(a)+f(b)}{2}-\frac{1}{\ln b-\ln a}\dint\limits_{a}^{b}%
\frac{f(x)}{x}dx\right\vert  \label{3-7a} \\
&\leq &\ln \left( \frac{b}{a}\right) \left( \frac{1}{2}\right) ^{2-\frac{1}{q%
}}\left[ a\left\{ c_{1}(1,q)\left\vert f^{\prime }\left( a\right)
\right\vert ^{q}+c_{2}(1,q)\left\vert f^{\prime }\left( b\right) \right\vert
^{q}\right\} ^{\frac{1}{q}}\right.  \notag \\
&&\left. +b\left\{ c_{3}(1,q)\left\vert f^{\prime }\left( b\right)
\right\vert ^{q}+c_{4}(1,q)\left\vert f^{\prime }\left( a\right) \right\vert
^{q}\right\} ^{\frac{1}{q}}\right] ,  \notag
\end{eqnarray}%
\begin{eqnarray}
&&\left\vert f\left( \sqrt{ab}\right) -\frac{1}{\ln b-\ln a}%
\dint\limits_{a}^{b}\frac{f(x)}{x}dx\right\vert  \label{3-7b} \\
&\leq &\ln \left( \frac{b}{a}\right) \left( \frac{1}{2}\right) ^{3-\frac{1}{q%
}}\left[ a\left\{ c_{1}(1,q/2)\left\vert f^{\prime }\left( a\right)
\right\vert ^{q}+c_{2}(1,q/2)\left\vert f^{\prime }\left( \sqrt{ab}\right)
\right\vert ^{q}\right\} ^{\frac{1}{q}}\right.  \notag \\
&&\left. +b\left\{ c_{3}(1,q/2)\left\vert f^{\prime }\left( b\right)
\right\vert ^{q}+c_{4}(1,q/2)\left\vert f^{\prime }\left( \sqrt{ab}\right)
\right\vert ^{q}\right\} ^{\frac{1}{q}}\right]  \notag
\end{eqnarray}
\end{corollary}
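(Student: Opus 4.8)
The plan is to exploit the remark stated just before Section 3, namely that for $s=1$ the two notions of GA-$s$-convexity both collapse to ordinary GA-convexity. Concretely, if $\left\vert f^{\prime }\right\vert ^{q}$ is GA-convex on $\left[ a,b\right] $, then it satisfies
\[
\left\vert f^{\prime }\left( x^{t}y^{1-t}\right) \right\vert ^{q}\leq t\left\vert f^{\prime }(x)\right\vert ^{q}+(1-t)\left\vert f^{\prime }(y)\right\vert ^{q},
\]
which is precisely the defining inequality of GA-$s$-convexity in the second sense with the value $s=1$ (and equally in the first sense, since $t^{1}=t$). Thus $\left\vert f^{\prime }\right\vert ^{q}$ falls under the hypotheses of Theorem \ref{2.1}(a) with $s=1$, and the corollary should follow by direct specialization rather than by any new estimate.

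First I would invoke Theorem \ref{2.1}(a) with $s=1$. Inequality (\ref{3-3}) then reproduces the right-hand side of (\ref{3-7a}) verbatim, with the four coefficients $c_{1}(s,q),\dots ,c_{4}(s,q)$ evaluated at $s=1$; and inequality (\ref{3-4}) reproduces (\ref{3-7b}), with those coefficients taken at the halved second argument $q/2$. No re-derivation of the power-mean estimates is required, since the entire computational content already resides in the proof of Theorem \ref{2.1}: the two chains of inequalities there hold for every admissible $s$, and one simply reads them off at $s=1$.

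As a consistency check (and as an alternative route), one may instead feed the same GA-convex $\left\vert f^{\prime }\right\vert ^{q}$ into Theorem \ref{2.1}(b), which is phrased for the first sense. That branch replaces $c_{1},c_{3}$ by $c_{5},c_{6}$; but evaluating the defining integrals (\ref{3-41}) and (\ref{3-61}) at $s=1$ gives
\[
c_{5}(1,q)=\int_{0}^{1}t(1-t)\left( \tfrac{b}{a}\right) ^{qt}\,dt=c_{1}(1,q),\qquad c_{6}(1,q)=c_{3}(1,q),
\]
so both branches yield identical bounds, confirming the coefficient labelling in (\ref{3-7a})--(\ref{3-7b}).

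There is essentially no obstacle here: the statement is a pure specialization, and the only points demanding care are matching the coefficient indices (the $c_{1},c_{2},c_{3},c_{4}$ coming from the second-sense branch) and remembering the substitution $q\mapsto q/2$ in the second inequality. The substantive analytic work has been carried out once and for all in Theorem \ref{2.1}.
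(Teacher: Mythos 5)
Your proposal is correct and coincides with the paper's own derivation: the corollary is obtained exactly by setting $s=1$ in Theorem \ref{2.1}, using that GA-$1$-convexity in either sense is ordinary GA-convexity. The consistency check that $c_{5}(1,q)=c_{1}(1,q)$ and $c_{6}(1,q)=c_{3}(1,q)$ is a correct (if optional) addition.
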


If taking $q=1$ in Theorem \ref{2.1}, we can derive the following corollary.

\begin{corollary}
Let $f:I\subseteq \left( 0,\infty \right) \mathbb{\rightarrow R}$ be
differentiable on $I^{\circ }$, and $a,b\in I^{\circ }$ with $a<b$ and $%
f^{\prime }\in L\left[ a,b\right] .$

a) If $\left\vert f^{\prime }\right\vert $ is GA-$s$-convex function in the
second sense on $\left[ a,b\right] $, $s\in \left( 0,1\right] $, then%
\begin{equation*}
\left\vert \frac{f(a)+f(b)}{2}-\frac{1}{\ln b-\ln a}\dint\limits_{a}^{b}%
\frac{f(x)}{x}dx\right\vert 
\end{equation*}%
\begin{eqnarray*}
&\leq &\frac{\ln \left( \frac{b}{a}\right) }{2}\left[ \left(
ac_{1}(s,1)+bc_{4}(s,1)\right) \left\vert f^{\prime }\left( a\right)
\right\vert \right.  \\
&&\left. +\left( bc_{3}(s,1)+ac_{2}(s,1)\right) \left\vert f^{\prime }\left(
b\right) \right\vert \right] 
\end{eqnarray*}%
\begin{equation*}
\left\vert f\left( \sqrt{ab}\right) -\frac{1}{\ln b-\ln a}%
\dint\limits_{a}^{b}\frac{f(x)}{x}dx\right\vert 
\end{equation*}%
\begin{eqnarray*}
&\leq &\frac{\ln \left( \frac{b}{a}\right) }{4}\left[ ac_{1}(s,1/2)\left%
\vert f^{\prime }\left( a\right) \right\vert +bc_{3}(s,1/2)\left\vert
f^{\prime }\left( b\right) \right\vert \right.  \\
&&\left. +\left( ac_{2}(s,1/2)+bc_{4}(s,1/2)\right) \left\vert f^{\prime
}\left( \sqrt{ab}\right) \right\vert \right] 
\end{eqnarray*}%
where $c_{1}$, $c_{2}$, $c_{3}$, $c_{4}$ are defined by (\ref{3-41}).

b) If $\left\vert f^{\prime }\right\vert ^{q}$ is GA-$s$-convex function in
the first sense on $\left[ a,b\right] $, $s\in \left( 0,1\right] $, then%
\begin{equation*}
\left\vert \frac{f(a)+f(b)}{2}-\frac{1}{\ln b-\ln a}\dint\limits_{a}^{b}%
\frac{f(x)}{x}dx\right\vert 
\end{equation*}%
\begin{eqnarray*}
&\leq &\frac{\ln \left( \frac{b}{a}\right) }{2}\left[ \left(
ac_{5}(s,1)+bc_{4}(s,1)\right) \left\vert f^{\prime }\left( a\right)
\right\vert \right.  \\
&&\left. +\left( ac_{2}(s,1)+bc_{6}(s,1)\right) \left\vert f^{\prime }\left(
b\right) \right\vert \right] ,
\end{eqnarray*}%
\begin{equation*}
\left\vert f\left( \sqrt{ab}\right) -\frac{1}{\ln b-\ln a}%
\dint\limits_{a}^{b}\frac{f(x)}{x}dx\right\vert 
\end{equation*}%
\begin{eqnarray*}
&\leq &\frac{\ln \left( \frac{b}{a}\right) }{4}\left[ ac_{5}(s,1/2)\left%
\vert f^{\prime }\left( a\right) \right\vert +bc_{6}(s,1/2)\left\vert
f^{\prime }\left( b\right) \right\vert \right.  \\
&&\left. +\left( ac_{2}(s,1/2)+bc_{4}(s,1/2)\right) \left\vert f^{\prime
}\left( \sqrt{ab}\right) \right\vert \right] ,
\end{eqnarray*}%
where $c_{2}$, $c_{4}$, $c_{5}$, $c_{6}$ are defined by (\ref{3-41}) and (%
\ref{3-61}).
\end{corollary}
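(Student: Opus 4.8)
The plan is to derive all four displayed inequalities of this corollary as the special case $q=1$ of the four inequalities in Theorem~\ref{2.1}; no new analytic estimate is required, since fixing $q=1$ merely trivializes the outer exponent produced by the power-mean step in the proof of that theorem. I would begin with the midpoint-type bound (\ref{3-3}) of part~a) and substitute $q=1$ everywhere $q$ appears: in the scalar prefactor, in the second arguments of the constants $c_i(s,q)$, and in the outer exponent $\tfrac{1}{q}$ on each curly brace.

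Two simplifications then do all the work. First, the prefactor collapses: $\left(\tfrac12\right)^{2-1/q}$ becomes $\tfrac12$ in the midpoint bounds (\ref{3-3}) and (\ref{3-5}), while $\left(\tfrac12\right)^{3-1/q}$ becomes $\tfrac14$ in the geometric-mean bounds (\ref{3-4}) and (\ref{3-6}). Second, the outer power $\left\{\cdots\right\}^{1/q}$ reduces to the identity, so each brace becomes a plain linear combination of $\left\vert f'(a)\right\vert$, $\left\vert f'(b)\right\vert$, and, in the $f(\sqrt{ab})$ bounds, $\left\vert f'(\sqrt{ab})\right\vert$, with the constants evaluated at $q=1$ (midpoint bounds) or $q=1/2$ (geometric-mean bounds) exactly as in (\ref{3-41}) and (\ref{3-61}).

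What remains is to regroup the resulting sum by the derivative values. For (\ref{3-3}) one gathers $a\{c_1(s,1)\left\vert f'(a)\right\vert + c_2(s,1)\left\vert f'(b)\right\vert\} + b\{c_3(s,1)\left\vert f'(b)\right\vert + c_4(s,1)\left\vert f'(a)\right\vert\}$ into the coefficient $ac_1(s,1)+bc_4(s,1)$ on $\left\vert f'(a)\right\vert$ and $bc_3(s,1)+ac_2(s,1)$ on $\left\vert f'(b)\right\vert$, which is the first bound of part~a). The geometric-mean bound is treated identically, except that $\left\vert f'(\sqrt{ab})\right\vert$ appears in both braces and hence picks up the combined coefficient $ac_2(s,1/2)+bc_4(s,1/2)$. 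The two inequalities of part~b) follow in the same manner once $c_1$ and $c_3$ are replaced by $c_5$ and $c_6$, in accordance with (\ref{3-61}).

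I expect no genuine obstacle: the corollary is a formal consequence of Theorem~\ref{2.1}, so the only care needed is clerical—keeping the second argument of each $c_i$ correct ($q=1$ for the midpoint bounds, $q=1/2$ for the geometric-mean bounds) and not transposing the roles of $a$ and $b$ when collecting the coefficients.
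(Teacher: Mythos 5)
Your proposal is correct and is exactly the paper's route: the author gives no separate proof, merely noting that the corollary is obtained by setting $q=1$ in Theorem \ref{2.1}, and your substitution of $q=1$ into (\ref{3-3})--(\ref{3-6}) with the collapse of the exponents and the regrouping of coefficients reproduces all four stated bounds. The only care needed is indeed clerical, and you have handled the $c_i(s,1)$ versus $c_i(s,1/2)$ arguments correctly.
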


\begin{theorem}
\label{2.2}Let $f:I\subseteq \left( 0,\infty \right) \mathbb{\rightarrow R}$
be differentiable on $I^{\circ }$, and $a,b\in I^{\circ }$ with $a<b$ and $%
f^{\prime }\in L\left[ a,b\right] .$

a) If $\left\vert f^{\prime }\right\vert ^{q}$ is GA-$s$-convex function in
the second sense on $\left[ a,b\right] $ for $q>1$ and $s\in \left( 0,1%
\right] ,$ then%
\begin{equation}
\left\vert \frac{f(a)+f(b)}{2}-\frac{1}{\ln b-\ln a}\dint\limits_{a}^{b}%
\frac{f(x)}{x}dx\right\vert   \label{3-8}
\end{equation}%
\begin{eqnarray*}
&\leq &\frac{\ln \left( \frac{b}{a}\right) }{2}\left( \frac{q-1}{2q-1}%
\right) ^{1-\frac{1}{q}}\left[ a\left\{ c_{7}(s,q)\left\vert f^{\prime
}\left( a\right) \right\vert ^{q}+c_{8}(s,q)\left\vert f^{\prime }\left(
b\right) \right\vert ^{q}\right\} ^{\frac{1}{q}}\right.  \\
&&\left. +b\left\{ c_{9}(s,q)\left\vert f^{\prime }\left( b\right)
\right\vert ^{q}+c_{10}(s,q)\left\vert f^{\prime }\left( a\right)
\right\vert ^{q}\right\} ^{\frac{1}{q}}\right] 
\end{eqnarray*}%
\begin{equation}
\left\vert f\left( \sqrt{ab}\right) -\frac{1}{\ln b-\ln a}%
\dint\limits_{a}^{b}\frac{f(x)}{x}dx\right\vert   \label{3-9}
\end{equation}%
\begin{eqnarray*}
&\leq &\frac{\ln \left( \frac{b}{a}\right) }{4}\left( \frac{q-1}{2q-1}%
\right) ^{1-\frac{1}{q}}\left[ a\left\{ c_{7}(s,q/2)\left\vert f^{\prime
}\left( a\right) \right\vert ^{q}+c_{8}(s,q/2)\left\vert f^{\prime }\left( 
\sqrt{ab}\right) \right\vert ^{q}\right\} ^{\frac{1}{q}}\right.  \\
&&\left. +b\left\{ c_{9}(s,q/2)\left\vert f^{\prime }\left( b\right)
\right\vert ^{q}+c_{10}(s,q/2)\left\vert f^{\prime }\left( \sqrt{ab}\right)
\right\vert ^{q}\right\} ^{\frac{1}{q}}\right] 
\end{eqnarray*}%
where 
\begin{eqnarray}
c_{7}(s,q) &=&\dint\limits_{0}^{1}\left( 1-t\right) ^{s}\left( \frac{b}{a}%
\right) ^{qt}dt,\ c_{8}(s,q)=\dint\limits_{0}^{1}t^{s}\left( \frac{b}{a}%
\right) ^{qt}dt,  \label{3-91} \\
c_{9}(s,q) &=&\dint\limits_{0}^{1}\left( 1-t\right) ^{s}\left( \frac{a}{b}%
\right) ^{qt}dt,\ c_{10}(s,q)=\dint\limits_{0}^{1}t^{s}\left( \frac{a}{b}%
\right) ^{qt}dt,  \notag
\end{eqnarray}%
b) If $\left\vert f^{\prime }\right\vert ^{q}$ is GA-$s$-convex function in
the first sense on $\left[ a,b\right] $ for $q>1$ and $s\in \left( 0,1\right]
,$ then%
\begin{equation}
\left\vert \frac{f(a)+f(b)}{2}-\frac{1}{\ln b-\ln a}\dint\limits_{a}^{b}%
\frac{f(x)}{x}dx\right\vert   \label{3-10}
\end{equation}%
\begin{eqnarray*}
&\leq &\frac{\ln \left( \frac{b}{a}\right) }{2}\left( \frac{q-1}{2q-1}%
\right) ^{1-\frac{1}{q}}\left[ a\left\{ c_{11}(s,q)\left\vert f^{\prime
}\left( a\right) \right\vert ^{q}+c_{8}(s,q)\left\vert f^{\prime }\left(
b\right) \right\vert ^{q}\right\} ^{\frac{1}{q}}\right.  \\
&&\left. +b\left\{ c_{12}(s,q)\left\vert f^{\prime }\left( b\right)
\right\vert ^{q}+c_{10}(s,q)\left\vert f^{\prime }\left( a\right)
\right\vert ^{q}\right\} ^{\frac{1}{q}}\right] 
\end{eqnarray*}%
\begin{equation}
\left\vert f\left( \sqrt{ab}\right) -\frac{1}{\ln b-\ln a}%
\dint\limits_{a}^{b}\frac{f(x)}{x}dx\right\vert   \label{3-11}
\end{equation}%
\begin{eqnarray*}
&\leq &\frac{\ln \left( \frac{b}{a}\right) }{4}\left( \frac{q-1}{2q-1}%
\right) ^{1-\frac{1}{q}}\left[ a\left\{ c_{11}(s,q/2)\left\vert f^{\prime
}\left( a\right) \right\vert ^{q}+c_{8}(s,q/2)\left\vert f^{\prime }\left( 
\sqrt{ab}\right) \right\vert ^{q}\right\} ^{\frac{1}{q}}\right.  \\
&&\left. +b\left\{ c_{12}(s,q/2)\left\vert f^{\prime }\left( b\right)
\right\vert ^{q}+c_{10}(s,q/2)\left\vert f^{\prime }\left( \sqrt{ab}\right)
\right\vert ^{q}\right\} ^{\frac{1}{q}}\right] ,
\end{eqnarray*}%
where%
\begin{equation}
c_{11}(s,q)=\dint\limits_{0}^{1}\left( 1-t^{s}\right) \left( \frac{b}{a}%
\right) ^{qt}dt,\ c_{12}(s,q)=\dint\limits_{0}^{1}\left( 1-t^{s}\right)
\left( \frac{a}{b}\right) ^{qt}dt.  \label{3-101}
\end{equation}
\end{theorem}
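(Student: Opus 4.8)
The plan is to mirror the proof of Theorem~\ref{2.1}, replacing the power-mean inequality by Hölder's inequality; the two identities of Lemma~\ref{1.1} again furnish the starting points, one for each of the two displayed bounds in every part. The tell-tale sign that Hölder is the right tool is the factor $\left(\frac{q-1}{2q-1}\right)^{1-\frac1q}$, which is precisely what separating a weight $t$ into its own $L^{p}$-factor with $p=q/(q-1)$ will produce.

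For part (a), inequality (\ref{3-8}), I would begin from the second identity of Lemma~\ref{1.1}, take absolute values, and use the triangle inequality to reach the intermediate bound
\begin{equation*}
\frac{\ln(b/a)}{2}\left[a\int_{0}^{1}t\left(\tfrac{b}{a}\right)^{t}\left|f'(a^{1-t}b^{t})\right|dt+b\int_{0}^{1}t\left(\tfrac{a}{b}\right)^{t}\left|f'(b^{1-t}a^{t})\right|dt\right],
\end{equation*}
exactly as in the first line of the proof of Theorem~\ref{2.1}. To each integral I would then apply Hölder's inequality with exponents $p=q/(q-1)$ and $q$, writing $t\,(b/a)^{t}|f'|=t\cdot\bigl[(b/a)^{t}|f'|\bigr]$ and pulling the factor $t$ into the $p$-integral. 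Since $\int_{0}^{1}t^{q/(q-1)}dt=\frac{q-1}{2q-1}$, this yields the constant $\left(\frac{q-1}{2q-1}\right)^{1-\frac1q}$ and leaves the inner $q$-integrals $\int_{0}^{1}(b/a)^{qt}|f'(a^{1-t}b^{t})|^{q}dt$ and $\int_{0}^{1}(a/b)^{qt}|f'(b^{1-t}a^{t})|^{q}dt$ \emph{without} a weight $t$, which is exactly why $c_{7},\dots,c_{10}$ in (\ref{3-91}) carry no factor of $t$, in contrast to $c_{1},\dots,c_{4}$.

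The final step invokes GA-$s$-convexity of $|f'|^{q}$ in the second sense. Writing $a^{1-t}b^{t}=b^{t}a^{1-t}$ and taking $x=b$, $y=a$ gives $|f'(a^{1-t}b^{t})|^{q}\leq t^{s}|f'(b)|^{q}+(1-t)^{s}|f'(a)|^{q}$; integrating against $(b/a)^{qt}$ reproduces $c_{7}(s,q)|f'(a)|^{q}+c_{8}(s,q)|f'(b)|^{q}$, and the companion term, with $x=a$, $y=b$, yields $c_{9}(s,q)|f'(b)|^{q}+c_{10}(s,q)|f'(a)|^{q}$, establishing (\ref{3-8}). For (\ref{3-9}) the procedure is identical but starts from the \emph{first} identity of Lemma~\ref{1.1}; the key algebraic observation is that the argument $a^{1-t}(ab)^{t/2}$ equals $(\sqrt{ab})^{t}a^{1-t}$, so taking $x=\sqrt{ab}$, $y=a$ produces the geometric mean $\sqrt{ab}$ appearing in the bound, while the half-power in $(b/a)^{t/2}$ is what replaces $q$ by $q/2$ inside the constants.

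For part (b) I would repeat both computations verbatim, changing only the convexity estimate to the first sense, $|f'(b^{t}a^{1-t})|^{q}\leq t^{s}|f'(b)|^{q}+(1-t^{s})|f'(a)|^{q}$; the coefficient $(1-t^{s})$ of $|f'(a)|^{q}$ then integrates to $c_{11}$ (respectively $c_{12}$ for the reciprocal ratio) defined in (\ref{3-101}), while the coefficient $t^{s}$ of $|f'(b)|^{q}$ still integrates to $c_{8}$ (respectively $c_{10}$), giving (\ref{3-10}) and (\ref{3-11}). The only point requiring genuine care throughout is the bookkeeping of which endpoint plays the role of $x$ versus $y$ in each GA-$s$-convexity step, since an interchange swaps the exponents $t^{s}$ and $(1-t)^{s}$ (or $1-t^{s}$) and hence the labels of the constants; there is no real analytic obstacle, as the constants $c_{7},\dots,c_{12}$ are simply left in integral form.
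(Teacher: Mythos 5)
Your proposal is correct and follows essentially the same route as the paper: both start from the two identities of Lemma \ref{1.1}, apply H\"{o}lder's inequality with exponents $q/(q-1)$ and $q$ so that $\int_{0}^{1}t^{q/(q-1)}dt=\frac{q-1}{2q-1}$ produces the stated constant and the weight $t$ disappears from the inner integrals, and then invoke GA-$s$-convexity of $\left\vert f^{\prime}\right\vert^{q}$ (second or first sense) to arrive at $c_{7},\dots,c_{12}$. Your identification of $a^{1-t}(ab)^{t/2}=(\sqrt{ab})^{t}a^{1-t}$ as the reason $\sqrt{ab}$ and the parameter $q/2$ appear in (\ref{3-9}) and (\ref{3-11}) is exactly the mechanism used in the paper.
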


\begin{proof}
a) (1) Since $\left\vert f^{\prime }\right\vert ^{q}$ is GA-$s$-convex
function in the second sense on $\left[ a,b\right] $, from lemma \ref{1.1}
and H\"{o}lder inequality, we have%
\begin{eqnarray*}
&&\left\vert \frac{f(a)+f(b)}{2}-\frac{1}{\ln b-\ln a}\dint\limits_{a}^{b}%
\frac{f(x)}{x}dx\right\vert  \\
&\leq &\frac{\ln \left( \frac{b}{a}\right) }{2}\left[ a\dint\limits_{0}^{1}t%
\left( \frac{b}{a}\right) ^{t}\left\vert f^{\prime }\left(
a^{1-t}b^{t}\right) \right\vert dt+b\dint\limits_{0}^{1}t\left( \frac{a}{b}%
\right) ^{t}\left\vert f^{\prime }\left( b^{1-t}a^{t}\right) \right\vert dt%
\right] 
\end{eqnarray*}%
\begin{eqnarray}
&\leq &\frac{a\ln \left( \frac{b}{a}\right) }{2}\left(
\dint\limits_{0}^{1}t^{\frac{q}{q-1}}dt\right) ^{1-\frac{1}{q}}\left(
\dint\limits_{0}^{1}\left( \frac{b}{a}\right) ^{qt}\left\vert f^{\prime
}\left( a^{1-t}b^{t}\right) \right\vert ^{q}dt\right) ^{\frac{1}{q}}  \notag
\\
&&+\frac{b}{2}\ln \left( \frac{b}{a}\right) \left( \dint\limits_{0}^{1}t^{%
\frac{q}{q-1}}dt\right) ^{1-\frac{1}{q}}\left( \dint\limits_{0}^{1}\left( 
\frac{a}{b}\right) ^{qt}\left\vert f^{\prime }\left( b^{1-t}a^{t}\right)
\right\vert ^{q}dt\right) ^{\frac{1}{q}}  \label{3-8a}
\end{eqnarray}%
\begin{eqnarray*}
&\leq &\frac{a\ln \left( \frac{b}{a}\right) }{2}\left( \frac{q-1}{2q-1}%
\right) ^{1-\frac{1}{q}}\left( \dint\limits_{0}^{1}\left( \frac{b}{a}\right)
^{qt}\left( \left( 1-t\right) ^{s}\left\vert f^{\prime }\left( a\right)
\right\vert ^{q}+t^{s}\left\vert f^{\prime }\left( b\right) \right\vert
^{q}\right) dt\right) ^{\frac{1}{q}} \\
&&+\frac{b}{2}\ln \left( \frac{b}{a}\right) \left( \frac{q-1}{2q-1}\right)
^{1-\frac{1}{q}}\left( \dint\limits_{0}^{1}\left( \frac{a}{b}\right)
^{qt}\left( \left( 1-t\right) ^{s}\left\vert f^{\prime }\left( b\right)
\right\vert ^{q}+t^{s}\left\vert f^{\prime }\left( a\right) \right\vert
^{q}\right) dt\right) ^{\frac{1}{q}}
\end{eqnarray*}%
\begin{eqnarray*}
&\leq &\frac{\ln \left( \frac{b}{a}\right) }{2}\left( \frac{q-1}{2q-1}%
\right) ^{1-\frac{1}{q}}\left[ a\left\{ c_{7}(s,q)\left\vert f^{\prime
}\left( a\right) \right\vert ^{q}+c_{8}(s,q)\left\vert f^{\prime }\left(
b\right) \right\vert ^{q}\right\} ^{\frac{1}{q}}\right.  \\
&&\left. +b\left\{ c_{9}(s,q)\left\vert f^{\prime }\left( b\right)
\right\vert ^{q}+c_{10}(s,q)\left\vert f^{\prime }\left( a\right)
\right\vert ^{q}\right\} ^{\frac{1}{q}}\right] .
\end{eqnarray*}

(2)Since $\left\vert f^{\prime }\right\vert ^{q}$ is GA-$s$-convex function
in the first sense on $\left[ a,b\right] $, from lemma \ref{1.1} and H\"{o}%
lder inequality, we have%
\begin{eqnarray}
&&\left\vert f\left( \sqrt{ab}\right) -\frac{1}{\ln b-\ln a}%
\dint\limits_{a}^{b}\frac{f(x)}{x}dx\right\vert   \notag \\
&\leq &\frac{\ln \frac{b}{a}}{4}\left[ a\dint\limits_{0}^{1}t\left( \frac{b}{%
a}\right) ^{\frac{t}{2}}\left\vert f^{\prime }\left( a^{1-t}\left( ab\right)
^{\frac{t}{2}}\right) \right\vert dt+b\dint\limits_{0}^{1}t\left( \frac{a}{b}%
\right) ^{\frac{t}{2}}\left\vert f^{\prime }\left( b^{1-t}\left( ab\right) ^{%
\frac{t}{2}}\right) \right\vert dt\right]   \label{3-9a}
\end{eqnarray}%
\begin{eqnarray*}
&\leq &\frac{a\ln \frac{b}{a}}{4}\left( \frac{q-1}{2q-1}\right) ^{1-\frac{1}{%
q}}\left( \dint\limits_{0}^{1}\left( \frac{b}{a}\right) ^{\frac{qt}{2}%
}\left( \left( 1-t\right) ^{s}\left\vert f^{\prime }\left( a\right)
\right\vert ^{q}+t^{s}\left\vert f^{\prime }\left( \sqrt{ab}\right)
\right\vert ^{q}\right) dt\right) ^{\frac{1}{q}} \\
&&+\frac{b\ln \frac{b}{a}}{4}\left( \frac{q-1}{2q-1}\right) ^{1-\frac{1}{q}%
}\left( \dint\limits_{0}^{1}\left( \frac{a}{b}\right) ^{\frac{qt}{2}}\left(
\left( 1-t\right) ^{s}\left\vert f^{\prime }\left( b\right) \right\vert
^{q}+t^{s}\left\vert f^{\prime }\left( \sqrt{ab}\right) \right\vert
^{q}\right) dt\right) ^{\frac{1}{q}}
\end{eqnarray*}%
\begin{eqnarray*}
&\leq &\frac{\ln \left( \frac{b}{a}\right) }{4}\left( \frac{q-1}{2q-1}%
\right) ^{1-\frac{1}{q}}\left[ a\left\{ c_{7}(s,q/2)\left\vert f^{\prime
}\left( a\right) \right\vert ^{q}+c_{8}(s,q/2)\left\vert f^{\prime }\left( 
\sqrt{ab}\right) \right\vert ^{q}\right\} ^{\frac{1}{q}}\right.  \\
&&\left. +b\left\{ c_{9}(s,q/2)\left\vert f^{\prime }\left( b\right)
\right\vert ^{q}+c_{10}(s,q/2)\left\vert f^{\prime }\left( \sqrt{ab}\right)
\right\vert ^{q}\right\} ^{\frac{1}{q}}\right] .
\end{eqnarray*}%
b) (1) Since $\left\vert f^{\prime }\right\vert ^{q}$ is GA-$s$-convex
function in the first sense on $\left[ a,b\right] $, from the inequality (%
\ref{3-8a}), we have%
\begin{equation*}
\left\vert \frac{f(a)+f(b)}{2}-\frac{1}{\ln b-\ln a}\dint\limits_{a}^{b}%
\frac{f(x)}{x}dx\right\vert 
\end{equation*}%
\begin{eqnarray*}
&\leq &\frac{a\ln \left( \frac{b}{a}\right) }{2}\left( \frac{q-1}{2q-1}%
\right) ^{1-\frac{1}{q}}\left( \dint\limits_{0}^{1}\left( \frac{b}{a}\right)
^{qt}\left( \left( 1-t^{s}\right) \left\vert f^{\prime }\left( a\right)
\right\vert ^{q}+t^{s}\left\vert f^{\prime }\left( b\right) \right\vert
^{q}\right) dt\right) ^{\frac{1}{q}} \\
&&+\frac{b\ln \left( \frac{b}{a}\right) }{2}\left( \frac{q-1}{2q-1}\right)
^{1-\frac{1}{q}}\left( \dint\limits_{0}^{1}\left( \frac{a}{b}\right)
^{qt}\left( \left( 1-t^{s}\right) \left\vert f^{\prime }\left( b\right)
\right\vert ^{q}+t^{s}\left\vert f^{\prime }\left( a\right) \right\vert
^{q}\right) dt\right) ^{\frac{1}{q}}
\end{eqnarray*}%
\begin{eqnarray*}
&\leq &\frac{\ln \left( \frac{b}{a}\right) }{2}\left( \frac{q-1}{2q-1}%
\right) ^{1-\frac{1}{q}}\left[ a\left\{ c_{11}(s,q)\left\vert f^{\prime
}\left( a\right) \right\vert ^{q}+c_{8}(s,q)\left\vert f^{\prime }\left(
b\right) \right\vert ^{q}\right\} ^{\frac{1}{q}}\right.  \\
&&\left. +b\left\{ c_{12}(s,q)\left\vert f^{\prime }\left( b\right)
\right\vert ^{q}+c_{10}(s,q)\left\vert f^{\prime }\left( a\right)
\right\vert ^{q}\right\} ^{\frac{1}{q}}\right] 
\end{eqnarray*}

(2) Since $\left\vert f^{\prime }\right\vert ^{q}$ is GA-$s$-convex function
in the first sense on $\left[ a,b\right] $, the inequality (\ref{3-6}) is
easily obtained by using the inequality (\ref{3-9a}).
\end{proof}

If taking $s=1$ in Theorem \ref{2.2}, we can derive the following
inequalities for GA-convex.

\begin{corollary}
\label{2.2a}Let $f:I\subseteq \left( 0,\infty \right) \mathbb{\rightarrow R}$
be differentiable on $I^{\circ }$, and $a,b\in I^{\circ }$ with $a<b$ and $%
f^{\prime }\in L\left[ a,b\right] .$ If $\left\vert f^{\prime }\right\vert
^{q}$ is GA-convex function in the second sense on $\left[ a,b\right] $ for $%
q>1$, then%
\begin{equation}
\left\vert \frac{f(a)+f(b)}{2}-\frac{1}{\ln b-\ln a}\dint\limits_{a}^{b}%
\frac{f(x)}{x}dx\right\vert  \label{3-12a}
\end{equation}%
\begin{eqnarray*}
&\leq &\frac{\ln \left( \frac{b}{a}\right) }{2}\left( \frac{q-1}{2q-1}%
\right) ^{1-\frac{1}{q}}\left[ a\left\{ c_{7}(1,q)\left\vert f^{\prime
}\left( a\right) \right\vert ^{q}+c_{8}(1,q)\left\vert f^{\prime }\left(
b\right) \right\vert ^{q}\right\} ^{\frac{1}{q}}\right. \\
&&\left. +b\left\{ c_{9}(1,q)\left\vert f^{\prime }\left( b\right)
\right\vert ^{q}+c_{10}(1,q)\left\vert f^{\prime }\left( a\right)
\right\vert ^{q}\right\} ^{\frac{1}{q}}\right] ,
\end{eqnarray*}%
\begin{equation}
\left\vert f\left( \sqrt{ab}\right) -\frac{1}{\ln b-\ln a}%
\dint\limits_{a}^{b}\frac{f(x)}{x}dx\right\vert  \label{3-12b}
\end{equation}%
\begin{eqnarray*}
&\leq &\frac{\ln \left( \frac{b}{a}\right) }{4}\left( \frac{q-1}{2q-1}%
\right) ^{1-\frac{1}{q}}\left[ a\left\{ c_{7}(1,q/2)\left\vert f^{\prime
}\left( a\right) \right\vert ^{q}+c_{8}(1,q/2)\left\vert f^{\prime }\left( 
\sqrt{ab}\right) \right\vert ^{q}\right\} ^{\frac{1}{q}}\right. \\
&&\left. +b\left\{ c_{9}(1,q/2)\left\vert f^{\prime }\left( b\right)
\right\vert ^{q}+c_{10}(1,q/2)\left\vert f^{\prime }\left( \sqrt{ab}\right)
\right\vert ^{q}\right\} ^{\frac{1}{q}}\right] ,
\end{eqnarray*}%
where $c_{7}$, $c_{8}$, $c_{9}$, $c_{10}$ are defined by (\ref{3-91}) and (%
\ref{3-101}).
\end{corollary}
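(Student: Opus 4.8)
The plan is to obtain Corollary \ref{2.2a} as the special case $s=1$ of part (a) of Theorem \ref{2.2}. First I would invoke the observation recorded at the end of Section~2: when $s=1$ the defining inequality of a GA-$s$-convex function in the second sense, namely $f(x^{t}y^{1-t})\leq t^{s}f(x)+(1-t)^{s}f(y)$, collapses to $f(x^{t}y^{1-t})\leq tf(x)+(1-t)f(y)$, which is precisely the GA-convexity condition from the fourth definition. Consequently the hypothesis that $\left\vert f^{\prime }\right\vert ^{q}$ is GA-convex on $\left[ a,b\right] $ coincides exactly with the hypothesis that $\left\vert f^{\prime }\right\vert ^{q}$ is GA-$1$-convex in the second sense, so Theorem \ref{2.2}(a) applies verbatim upon taking $s=1$.

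Next I would simply read off the two resulting inequalities. Substituting $s=1$ into inequality (\ref{3-8}) yields (\ref{3-12a}), and substituting $s=1$ into inequality (\ref{3-9}) yields (\ref{3-12b}); in each case the four coefficients become $c_{7}(1,q)$, $c_{8}(1,q)$, $c_{9}(1,q)$, $c_{10}(1,q)$ (respectively their $q/2$ analogues in the second inequality), and by the formulas (\ref{3-91}) these are exactly the integrals $\dint_{0}^{1}\left( 1-t\right) \left( b/a\right) ^{qt}dt$, $\dint_{0}^{1}t\left( b/a\right) ^{qt}dt$, and so on, evaluated at $s=1$. No fresh estimation is needed, since all the analytic work---applying Lemma \ref{1.1}, then the H\"{o}lder inequality to split off the factor $\left( \dint_{0}^{1}t^{q/(q-1)}dt\right) ^{1-1/q}=\left( (q-1)/(2q-1)\right) ^{1-1/q}$, and finally the GA-$s$-convexity bound on $\left\vert f^{\prime }\right\vert ^{q}$---has already been carried out in the proof of Theorem \ref{2.2}.

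Because the argument is a pure specialization, there is no genuine obstacle to overcome; the only point to verify is the notational matching, i.e.\ that the constants $c_{7},c_{8},c_{9},c_{10}$ appearing in the right-hand sides of Theorem \ref{2.2}(a) are the very ones defined in (\ref{3-91}) and used in the corollary, and that inserting $s=1$ leaves the structure of both bounds intact. This is immediate, so the corollary follows at once from Theorem \ref{2.2}(a).
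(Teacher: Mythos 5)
Your proposal is correct and matches the paper exactly: the paper itself derives Corollary \ref{2.2a} by simply setting $s=1$ in Theorem \ref{2.2}, which is precisely your specialization argument, including the observation that GA-$1$-convexity in the second sense coincides with ordinary GA-convexity. No further commentary is needed.
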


\section{Application to Special Means}

\begin{proposition}
Let $0<a<b,$and $q\geq 1.$ Then 
\begin{eqnarray*}
&&\left\vert A\left( a,b\right) -L\left( a,b\right) \right\vert \leq \left[
\ln \left( \frac{b}{a}\right) \right] ^{1-\frac{1}{q}}\left( \frac{1}{2}%
\right) ^{2-\frac{1}{q}}\left( \frac{1}{q}\right) ^{\frac{1}{q}} \\
&&\times \left[ \left\{ b^{q}-L(a^{q},b^{q})\right\} ^{\frac{1}{q}}+\left\{
L(a^{q},b^{q})-a^{q}\right\} ^{\frac{1}{q}}\right]
\end{eqnarray*}%
\begin{eqnarray*}
&&\left\vert G\left( a,b\right) -L\left( a,b\right) \right\vert \leq \left[
\ln \left( \frac{b}{a}\right) \right] ^{1-\frac{1}{q}}\left( \frac{1}{2}%
\right) ^{3-\frac{1}{q}}\left( \frac{2}{q}\right) ^{\frac{1}{q}} \\
&&\times \left[ \sqrt{a}\left\{ b^{q/2}-L(a^{q/2},b^{q/2})\right\} ^{\frac{1%
}{q}}+\sqrt{b}\left\{ L(a^{q/2},b^{q/2})-a^{q/2}\right\} ^{\frac{1}{q}}%
\right] .
\end{eqnarray*}
\end{proposition}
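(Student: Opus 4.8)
The plan is to specialize Corollary \ref{2.1a} to the identity function $f(x)=x$ on $[a,b]$. First I would check the hypotheses: $f$ is differentiable with $f'(x)=1$, so $\left\vert f'\right\vert^{q}\equiv 1$ is constant and hence GA-convex on $[a,b]$ for every $q\geq 1$. With this choice the three quantities on the left-hand sides of (\ref{3-7a}) and (\ref{3-7b}) become means of $a$ and $b$: indeed $\frac{f(a)+f(b)}{2}=A(a,b)$, $f(\sqrt{ab})=G(a,b)$, and
\[
\frac{1}{\ln b-\ln a}\int_{a}^{b}\frac{f(x)}{x}\,dx=\frac{1}{\ln b-\ln a}\int_{a}^{b}dx=\frac{b-a}{\ln b-\ln a}=L(a,b).
\]
Thus (\ref{3-7a}) yields a bound on $\left\vert A(a,b)-L(a,b)\right\vert$ and (\ref{3-7b}) a bound on $\left\vert G(a,b)-L(a,b)\right\vert$, and since $\left\vert f'\right\vert\equiv 1$ every factor $\left\vert f'(\cdot)\right\vert^{q}$ on the right-hand sides equals $1$.

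The key step is to evaluate the surviving constants. The crucial simplification at $s=1$ is that the two constants paired in each brace collapse:
\[
c_{1}(1,q)+c_{2}(1,q)=\int_{0}^{1}\left[t(1-t)+t^{2}\right]\left(\tfrac{b}{a}\right)^{qt}dt=\int_{0}^{1}t\left(\tfrac{b}{a}\right)^{qt}dt,
\]
and likewise $c_{3}(1,q)+c_{4}(1,q)=\int_{0}^{1}t\left(\tfrac{a}{b}\right)^{qt}dt$. I would then evaluate these elementary integrals. Writing $a^{q}\int_{0}^{1}t(b/a)^{qt}dt=\int_{0}^{1}t\,(a^{q})^{1-t}(b^{q})^{t}\,dt$ and integrating by parts, one obtains
\[
a^{q}\int_{0}^{1}t\left(\tfrac{b}{a}\right)^{qt}dt=\frac{b^{q}-L(a^{q},b^{q})}{q\ln(b/a)},\qquad b^{q}\int_{0}^{1}t\left(\tfrac{a}{b}\right)^{qt}dt=\frac{L(a^{q},b^{q})-a^{q}}{q\ln(b/a)},
\]
where the logarithmic mean enters through $L(a^{q},b^{q})=\frac{b^{q}-a^{q}}{q\ln(b/a)}$. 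Substituting these into (\ref{3-7a}), the outer factor $a$ (resp. $b$) combines with $a^{q}$ (resp. $b^{q}$) since $a\{\,\cdot\,\}^{1/q}=\{a^{q}(\,\cdot\,)\}^{1/q}$, one power of $\ln(b/a)$ cancels against $[\ln(b/a)]^{-1/q}$ to leave $[\ln(b/a)]^{1-1/q}$, and the factor $(1/q)^{1/q}$ appears; together with the prefactor $(1/2)^{2-1/q}$ this reproduces the first claimed inequality exactly.

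For the second inequality I would repeat the argument using (\ref{3-7b}), where the constants are now evaluated at $q/2$ in place of $q$. The same integral identities apply with $q$ replaced by $q/2$, so
\[
a^{q/2}\int_{0}^{1}t\left(\tfrac{b}{a}\right)^{(q/2)t}dt=\frac{2\left[b^{q/2}-L(a^{q/2},b^{q/2})\right]}{q\ln(b/a)},
\]
and analogously for the $(a/b)$ integral. Here the bookkeeping of exponents is the thing to watch: the outer factor $a$ now only partially absorbs the $a^{q/2}$ inside the brace, leaving $a^{1-1/2}=\sqrt{a}$ (resp. $\sqrt{b}$) outside, and the extra factor $2$ coming from the $q/2$ produces $(2/q)^{1/q}$ in place of $(1/q)^{1/q}$; combined with the prefactor $(1/2)^{3-1/q}$ this yields the second claimed inequality. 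The only genuine obstacle is this careful tracking of the powers of $a$, $b$, $\ln(b/a)$ and the constant $2/q$ through the $1/q$-th roots, since the integral evaluation itself is a routine integration by parts.
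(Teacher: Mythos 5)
Your proposal is correct and follows exactly the paper's route: the paper's entire proof is the one-line remark that the assertion follows from Corollary \ref{2.1a} with $f(x)=x$, and you carry out precisely that specialization, supplying the evaluation of $c_{1}+c_{2}$ and $c_{3}+c_{4}$ in terms of logarithmic means that the paper leaves implicit. All of your computations (the collapse of the paired constants at $s=1$, the identity $a^{q}\int_{0}^{1}t(b/a)^{qt}dt=\bigl[b^{q}-L(a^{q},b^{q})\bigr]/\bigl[q\ln(b/a)\bigr]$, and the exponent bookkeeping producing $\sqrt{a}$, $\sqrt{b}$ and $(2/q)^{1/q}$ in the second inequality) check out.
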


\begin{proof}
The assertion follows from the inequalities (\ref{3-7a}) and (\ref{3-7b}) in
Corollary \ref{2.1a} for $f(x)=x,\ x>0$.
\end{proof}

\begin{proposition}
Let $0<a<b\leq 1,$and $q>1.$ Then 
\begin{equation*}
\left\vert A\left( a,b\right) -L\left( a,b\right) \right\vert \leq \ln
\left( \frac{b}{a}\right) \left( \frac{q-1}{2q-1}\right) ^{1-\frac{1}{q}}L^{%
\frac{1}{q}}(a^{q},b^{q})
\end{equation*}%
\begin{equation*}
\left\vert G\left( a,b\right) -L\left( a,b\right) \right\vert \leq \frac{1}{2%
}\ln \left( \frac{b}{a}\right) \left( \frac{q-1}{2q-1}\right) ^{1-\frac{1}{q}%
}L^{\frac{1}{q}}(a^{q},b^{q})A\left( \sqrt{a},\sqrt{b}\right) .
\end{equation*}
\end{proposition}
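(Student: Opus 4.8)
The plan is to specialize Corollary \ref{2.2a} to the identity map $f(x)=x$, in exact parallel with the way the preceding proposition specializes Corollary \ref{2.1a}. First I would check the hypotheses. Here $f^{\prime}\equiv 1$, so $\left\vert f^{\prime}\right\vert^{q}\equiv 1$ is constant; a constant function satisfies the GA-convexity inequality in the second sense with equality, so the requirement of Corollary \ref{2.2a} holds for every $q>1$. With this choice the three ingredients on the left of (\ref{3-12a}) and (\ref{3-12b}) become elementary: $\frac{f(a)+f(b)}{2}=A(a,b)$, $f(\sqrt{ab})=\sqrt{ab}=G(a,b)$, and $\frac{1}{\ln b-\ln a}\int_{a}^{b}\frac{x}{x}\,dx=\frac{b-a}{\ln b-\ln a}=L(a,b)$. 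Hence the two left-hand sides are exactly $\left\vert A(a,b)-L(a,b)\right\vert$ and $\left\vert G(a,b)-L(a,b)\right\vert$.

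Next I would evaluate the right-hand sides. Since every derivative factor $\left\vert f^{\prime}(a)\right\vert^{q}$, $\left\vert f^{\prime}(b)\right\vert^{q}$, $\left\vert f^{\prime}(\sqrt{ab})\right\vert^{q}$ equals $1$, each brace in (\ref{3-12a}) collapses to a pairwise sum of the constants defined in (\ref{3-91}), namely $c_{7}(1,q)+c_{8}(1,q)$ and $c_{9}(1,q)+c_{10}(1,q)$. The crux of the argument, and essentially the only non-mechanical step, is the identity $\int_{0}^{1}\left(\frac{b}{a}\right)^{qt}\,dt=\frac{(b/a)^{q}-1}{q\ln(b/a)}=\frac{L(a^{q},b^{q})}{a^{q}}$, together with its mirror image $\int_{0}^{1}\left(\frac{a}{b}\right)^{qt}\,dt=\frac{L(a^{q},b^{q})}{b^{q}}$, which recast the exponential integrals $c_{7}+c_{8}$ and $c_{9}+c_{10}$ as logarithmic means. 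I expect this recognition of the elementary exponential integral as $L(a^{q},b^{q})$ divided by a power of $a$ or $b$ to be the main, albeit mild, obstacle.

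Finally I would assemble the pieces. For (\ref{3-12a}) the outer factors cancel the powers produced by taking the $q$-th root, since $a\left(L(a^{q},b^{q})/a^{q}\right)^{1/q}=L^{1/q}(a^{q},b^{q})$ and likewise with $b$; the two equal terms sum to $2L^{1/q}(a^{q},b^{q})$, and the prefactor $\frac{1}{2}$ delivers the first stated inequality. For (\ref{3-12b}) I would run the identical computation with $q/2$ in place of $q$: the braces collapse to $L(a^{q/2},b^{q/2})/a^{q/2}$ and $L(a^{q/2},b^{q/2})/b^{q/2}$, the outer factors $a$ and $b$ leave $\sqrt{a}$ and $\sqrt{b}$, and $\sqrt{a}+\sqrt{b}=2A(\sqrt{a},\sqrt{b})$, so with the prefactor $\frac{1}{4}$ one obtains $\frac{1}{2}\ln\left(\frac{b}{a}\right)\left(\frac{q-1}{2q-1}\right)^{1-1/q}L^{1/q}(a^{q/2},b^{q/2})A(\sqrt{a},\sqrt{b})$. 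I note that this direct computation yields $L^{1/q}(a^{q/2},b^{q/2})$ rather than the printed $L^{1/q}(a^{q},b^{q})$; since $0<a<b\leq 1$ forces $a^{q/2}\geq a^{q}$ and $b^{q/2}\geq b^{q}$ and $L$ is increasing in each variable, one has $L(a^{q/2},b^{q/2})\geq L(a^{q},b^{q})$, so $L^{1/q}(a^{q/2},b^{q/2})$ is the quantity the method actually produces.
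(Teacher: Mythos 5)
Your proposal is correct and is precisely the paper's own proof, which consists of the single sentence that the assertion follows from (\ref{3-12a}) and (\ref{3-12b}) of Corollary \ref{2.2a} with $f(x)=x$; you have merely carried out the computation the author leaves implicit, including the correct hypothesis check and the key identities $c_{7}(1,q)+c_{8}(1,q)=L(a^{q},b^{q})/a^{q}$ and $c_{9}(1,q)+c_{10}(1,q)=L(a^{q},b^{q})/b^{q}$. Your closing observation is a genuine catch and not a quibble: specializing (\ref{3-12b}) produces the factor $L^{1/q}(a^{q/2},b^{q/2})$, and since $0<a<b\leq 1$ gives $a^{q/2}\geq a^{q}$ and $b^{q/2}\geq b^{q}$, the printed factor $L^{1/q}(a^{q},b^{q})$ is the smaller of the two, so the second inequality as stated is strictly stronger than what the cited corollary delivers and is not established by the paper's one-line proof; what the method actually proves is the bound with $L^{1/q}(a^{q/2},b^{q/2})$ in place of $L^{1/q}(a^{q},b^{q})$.
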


\begin{proof}
The assertion follows from the inequalities (\ref{3-12a}) and (\ref{3-12b})
in Corollary \ref{2.2a} for $f(x)=x,\ x>0$.
\end{proof}


\begin{thebibliography}{99}
\bibitem{ADK11} M. W. Alomari, M. Darus, and U. S. Kirmaci, Some
inequalities of Hermite-Hadamard type for $s$-convex functions, Acta
Mathematica Scientia B31, No.4 (2011), 1643--1652.

\bibitem{DF99} S.S. Dragomir, S. Fitzpatrick, The Hadamard's inequality for $%
s$-convex functions in the second sense, Demonstratio Math. 32 (4) (1999),
687--696.

\bibitem{DP00} S.S. Dragomir and C.E.M. Pearce, Selected topics on
Hermite-Hadamard type inequalities and applications, RGMIA Monographs, 2000.
Available online at http://rgmia.vu.edu.au/monographs/hermite hadamard.html.

\bibitem{HM94} H. Hudzik and L. Maligranda, Some remarks on $s$-convex
functions, Aequationes Math., 48 (1994), 100-111.

\bibitem{HBI09} S. Hussain, M. I. Bhatti, and M. Iqbal, Hadamard-type
inequalities for $s$-convex functions. I, Punjab University Journal of
Mathematics, 41 (2009), 51--60.

\bibitem{I13b} I. Iscan, New estimates on generalization of some integral
inequalities for $s$-convex functions and their applications, International
Journal of Pure and Applied Mathematics, 86\textbf{, }No.4 (2013).

\bibitem{I13} I. Iscan, Some new Hermite-Hadamard type inequalities for
geometrically convex functions.

\bibitem{KBO07} U. S. Kirmaci, M. Klari\v{c}i\'{c} Bakula, M.E. \"{O}zdemir,
and J. Pe\v{c}ari\'{c}, Hadamard-type inequalities for $s$-convex functions,
Applied Mathematics and Computation, 193\textbf{,\ }No.1 (2007), 26--35.

\bibitem{N00} C. P. Niculescu, Convexity according to the geometric mean,
Math. Inequal. Appl. 3 (2) (2000), 155--167. Available online at
http://dx.doi.org/10.7153/mia-03-19.

\bibitem{N03} C. P. Niculescu, Convexity according to means, Math. Inequal.
Appl. 6 (4) (2003), 571--579. Available online at
http://dx.doi.org/10.7153/mia-06-53.

\bibitem{O61} W. Orlicz, A note on modular spaces I, Bull. Acad. Polon. Sci.
Ser. Math. Astronom. Phys., 9 (1961), 157-162.

\bibitem{SYQ13} Y. Shuang, H.-P. Yin, and F. Qi, Hermite-Hadamard type
integral inequalities for geometric-arithmetically s-convex functions,
Analysis (Munich) 33 (2) (2013), 197-208. Available online at
http://dx.doi.org/10.1524/anly.2013.1192.

\bibitem{ZJQ13} T.-Y. Zhang, A.-P. Ji and F. Qi, Some inequalities of
Hermite-Hadamard type for GA-convex functions with applications to means, Le
Matematiche, Vol. LXVIII (2013) -- Fasc. I, pp. 229--239. doi:
10.4418/2013.68.1.17

\bibitem{ZCZ10} X.-M. Zhang, Y.-M. Chu, and X.-H. Zhang, The
Hermite-Hadamard Type Inequality of GA-Convex Functions and Its Application,
Journal of Inequalities and Applications, Volume 2010, Article ID 507560, 11
pages. doi:10.1155/2010/507560.
\end{thebibliography}
\end{document}